\documentclass{article}
\title{Existence Results and KKT Optimality Conditions for Generalized Quasiconvex Functions}
\author{M.H. Alizadeh\thanks{Department of Mathematics, Institute for Advanced Studies in Basic Sciences (IASBS), Zanjan 45137-66731, Iran.
E-mail:m.alizadeh@iasbs.ac.ir; alimh.math@gmail.com}
\and
F. Lara\thanks{Instituto de Alta investigaci\'on (IAI),
Universidad de Tarapac\'a, Arica, Chile. E-mail:
felipelaraobreque@gmail.com; flarao@academicos.uta.cl. Web:
felipelara.cl, ORCID-ID: 0000-0002-9965-0921}}
\usepackage{amsmath}
\usepackage{amsfonts}
\usepackage{amssymb}
\usepackage{graphicx}%
\usepackage{color}
\usepackage{cite}

\providecommand{\U}[1]{\protect \rule{.1in}{.1in}}
\newtheorem{theorem}{Theorem}

\newtheorem{corollary}[theorem]{Corollary}

\newtheorem{definition}[theorem]{Definition}
\newtheorem{example}[theorem]{Example}

\newtheorem{proposition}[theorem]{Proposition}
\newtheorem{remark}[theorem]{Remark}

\newenvironment{proof}[1][Proof]{\noindent \textbf{#1.} }{\  \rule{0.5em}{0.5em}}
\begin{document}

\maketitle

\begin{abstract}
\noindent 
We studied a new notion of generalized convex functions called $e$-quasi\-con\-ve\-xi\-ty, which encompasses both quasiconvex and $e$-convex functions, including all Lipschitz functions. By extending the standard properties of quasiconvex functions to $e$-quasiconvex functions, we establish sufficient conditions for the nonemptiness and compactness of the solution set when minimizing an $e$-quasiconvex function, leveraging generalized asymptotic functions, a result which remains applicable even when the set of minimizers is nonconvex.
Furthermore, in the differentiable case, we ensure the sufficiency of the KKT optimality conditions when the constraint functions in the mathematical programming problems are $e$-quasiconvex. Finally, we illustrate our new results with several nonconvex (non-quasiconvex) examples.

\medskip

\noindent{{\bf Keywords}: Nonconvex optimization; Generalized convexity; Existence of solutions; KKT optimality conditions.}

\medskip

\noindent{\small \emph{Mathematics Subject Classification}: 90C26, 49J53, 47H04, 47H05.}

\end{abstract}


\section{Introduction}

Convex functions play a crucial role in applied mathematics and optimization, particularly in fields such as engineering, management, computer science, and machine learning. They provide fundamental insights and powerful tools for research in these disciplines due to their very interesting properties, in which local analysis of convex functions often leads to global conclusions, an extremely useful feature for proving the existence of solutions, deriving optimality conditions, and ensuring the convergence of algorithms to global minimizers, among other applications. For a deeper exploration of convex functions, please refer to Rockafellar’s renowned book \cite{rock}.  

Due to these remarkable properties for convex functions, many researchers have sought to extend them to broader classes of functions. Although several generalized convexity notions have been introduced, one of the most significant and widely applied is the concept of {\it quasiconvexity}.

Quasiconvex functions are intrinsically linked to the well-known Consumer Preference Theory (see Debreu \cite{D-1959}), as they provide the mathematical formulation of the {\it tendency to diversification} assumption, a cornerstone in the analysis of consumer behavior in modern economic theory. Motivated primarily by this connection, researchers have extensively studied this class of functions from both theoretical and applied perspectives (see \cite{Ansari-book, AE, ADSZ, CMA, FFB-Vera, HKS, M1, Manga, Penot}, among others), extending many remarkable properties of convex functions to quasiconvex settings. Moreover, in recent years, algorithms tailored for quasiconvex functions have been developed across various domains, contributing significantly to advances in generalized convexity theory and, in particular, to the field of quasiconvex optimization.

Other notions of generalized convexity have also been developed due to their theoretical significance. Interesting examples include $\epsilon$-convex functions (see \cite{Jo-Luc-MI}), $\sigma$-convex functions (see \cite{Ali 2018, Ali 2021,Ali-abadi,Hui-Sun}), and $e$-convex functions (see \cite{ALi-Mako,Ali 203e-conv}). Notably, the class of $e$-convex functions is particularly broad, since it encompasses both $\epsilon$-convex and $\sigma$-convex functions, as well as all Lipschitz functions  (see \cite{Ali 2018, Ali 2021, Ali 203e-conv}). Moreover, this class admits a comprehensive theoretical framework, including the $e$-subdifferential, $e$-monotonicity, and its connections with the Fitzpatrick function (see \cite{ Ali 203e-conv, ABP, AZ 2025}).  

To simultaneously address both quasiconvex and $e$-convex functions, the authors of \cite{Huang 24} introduced the concept of $e$-quasiconvexity, establishing several important properties and optimization applications. In particular, they extended the celebrated Arrow-Enthoven (AE) characterization, originally developed for differentiable quasiconvex functions, to the $e$-quasiconvex differentiable case. This significant theoretical advancement has created new opportunities for applying $e$-quasiconvex functions in optimization problems, especially in nonconvex mathematical programming, which is the goal of this work.

This paper advances the theory of $e$-quasiconvexity and its optimization a\-ppli\-ca\-tions. Our main contributions include; A novel characterization of $e$-con\-vex functions as sums of $e$-quasiconvex functions and linear functions, extending the seminal result from \cite{ACJ 94} to this broader framework; Existence theorems and sufficient conditions for solution set compactness in constrained and un\-cons\-trai\-ned minimization pro\-blems involving $e$-quasiconvex functions, esta\-bli\-shed through generalized asymptotic functions (see \cite{FFB-Vera,flo-had-lara-2,HLM,HLL,lara,lara-lopez,Penot}). In particular, these results remain valid even for nonconvex minimizer sets. For differentiable cases, we leverage the characterization from \cite{Huang 24} to derive enhanced optimality conditions for nonconvex mathematical programming pro\-blems, thereby generalizing previous results on quasiconvexity from \cite{AE,M1,Manga} to the $e$-quasiconvex setting. Each theoretical advancement is substantiated with carefully constructed examples where our results apply uniquely, demonstrating cases where existing literature fails to provide solutions.

The paper is organized as follows: Section 2 is dedicated to introducing the notation and preliminary results on generalized convexity and asymptotic ana\-ly\-sis. In Section \ref{sec:03}, we study $e$-quasiconvex functions by extending the concepts of $e$-convex and quasiconvex functions. In Section \ref{sec:04}, we provide existence of solutions for minimizing $e$-quasiconvex functions as well as sufficient conditions for the compactness of the solution set. Finally, in Section \ref{sec:05}, we establish finer sufficient KKT optimality conditions for nonconvex mathematical programming problems in which the constraints are $e$-quasiconvex.

\section{Preliminaries and Basic Definitions}\label{sec:02}

Let $K$ be a nonempty set in $\mathbb{R}^{n}$, its closure is denoted by ${\rm cl}\,K$, its boundary by ${\rm bd}\,K$, its topological interior by ${\rm int}\,K$ and its convex hull by ${\rm co}\,K$. The open ball with center at $x$ and radius $\delta >0$ is denoted by $B(x, \delta)$. The sets $[0, + \infty[$ and $]0, + \infty[$ are denoted by $\mathbb{R}_{+}$ and by $\mathbb{R}_{++}$, respectively. Given a $d \in \mathbb{R}^{n}$, $\mathcal{L}(d)$ denotes the linear space generated by $d$.



Given any function $f:\mathbb{R}^{n} \rightarrow \overline{\mathbb{R}} := \mathbb{R} \cup \{ \pm \infty\}$, the effective domain of $f$ is defined by ${\rm dom}\,f:= \{x \in \mathbb{R}^n: f(x) < + \infty\}$. We say that $f$ is a proper function if $f(x)>-\infty$ for every 
$x \in \mathbb{R}^n$ and ${\rm dom}\,f$ is nonempty (clearly, $f(x) = + \infty$ for every $x \notin {\rm dom}\,f$). Throughout this paper, all functions are assumed to be proper. We denote by ${\rm epi}\,f := \{(x,t) \in \mathbb{R}^{n} \times \mathbb{R}: f(x) \leq t \}$ the epigraph of $f$, by $S_{\lambda} (f) := \{x \in \mathbb{R}^{n}: f(x) \leq \lambda\}$ 
the sublevel of $f$ at the height $\lambda \in \mathbb{R}$, respectively,  and by ${\rm argmin}_{\mathbb{R}^{n}}\,f$ we mean the set of all minimal points of $f$.

\subsection{Generalized Convexity}

A proper function $f: \mathbb{R}^{n} \rightarrow \overline{\mathbb{R}}$ with convex domain is said to be:
\begin{itemize}
 \item[$(a)$] convex if for every $x, y \in {\rm dom}\,f$, we have
 $$f(\lambda x + (1-\lambda)y) \leq \lambda f(x) +(1-\lambda) f(y), ~ \forall ~ \lambda \in 
 ]0, 1[,$$
 \item[$(b)$] quasiconvex if for every $x, y \in {\rm dom}\,f$, we have
 $$f(\lambda x + (1-\lambda) y) \leq \max \{f(x), f(y)\}, ~ \forall ~ \lambda \in [0,1].$$
\end{itemize}
We mention that every convex function is quasiconvex. The continuous function $f: \mathbb{R} \rightarrow \mathbb{R}$, with $f(x) := \min\{\lvert x \rvert, 1\}$, is quasiconvex without being convex. Furthermore, recall that $f$ is convex if and only if ${\rm epi}\,f$ is a convex set while $f$ is quasiconvex if and only if $S_{\lambda} (f)$ is a  convex set, for all $\lambda \in \mathbb{R}$.


Let $K \subseteq \mathbb{R}^{n}$ be an open convex set and $f: K \rightarrow \mathbb{R}$ be a differentiable function. Then $f$ is quasiconvex if and only if for every $x, y \in K$, we have
\begin{equation}\label{char:AE}
 f(x) \leq f(y) ~ \Longrightarrow ~ \langle \nabla f(y), x - y \rangle \leq 0;
\end{equation}

Let $f: \mathbb{R}^{n} \rightarrow \mathbb{R}$ be a differentiable function. 
Then $f$ is said to be pseudoconvex (see \cite{Manga}) if
\begin{equation}\label{pseudoconvex}
 \langle \nabla f(x), y - x \rangle \geq 0 ~ \Longrightarrow ~ f(y) \geq f(x).
\end{equation}
If $f$ is pseudoconvex, then every local minimum is global minimum 
\cite[Theorem 3.2.5]{CMA}.

Throughout this paper, $e: \mathbb{R}^{n} \times \mathbb{R}^{n} \rightarrow
\mathbb{R} \cup \{+ \infty\}$ is an error bifunction. We recall that a bifunction 
$e$ is an error bifunction if (see \cite {Ali 203e-conv})
\begin{itemize} 
 \item ${\rm dom}\,e := \{(x, y) \in \mathbb{R}^{n} \times \mathbb{R}^{n}: \, e(x,y) < + \infty\} \neq \emptyset$,

 \item $e$ is nonnegative, 

 \item $e$ is symmetric, i.e., for all $(x, y )\in {\rm dom}\,e$ we have $e(x, y) = e(y, x)$. 
\end{itemize}
The notion of $e$-convexity was introduced and studied in \cite{ALi-Mako}, where the relationships between Hermite-Hard type inequalities and $e$-convexity are provided. 

Using an error bifunction $e$, we can define $e$-convex functions as follows:

\begin{definition}
 Let $f: \mathbb{R}^{n} \rightarrow \mathbb{R} \cup \left \{ +\infty \right \}$ be a function and $e$ be an error bifunction such that ${\rm dom}\,f \times {\rm dom}\,f \subseteq {\rm dom} \,e$. We say that $f$ is $e$\emph{-convex} if
 \begin{equation}\label{convexineq}
  f (tx + (1-t)y) \leq tf(x) + (1-t) f(y) + t(1-t) e(x, y),
 \end{equation}
 for all $x, y \in \mathbb{R}^{n}$ and all $t \in \, ]0,1[$.
\end{definition}

Clearly, all convex functions are $e$-convex (with $e \equiv 0$), but the converse statement does not hold in general. Furthermore, all Lipschitz functions are 
$e$-convex (see \cite[Proposition 18]{Ali 2021}). For every function $f: \mathbb{R}^{n} \rightarrow \mathbb{R} \cup \left \{+ \infty \right\}$  the map $e_{f}: \mathbb{R}^{n} \times \mathbb{R}^{n} \rightarrow \mathbb{R}_{+} \cup \left\{ + \infty \right\}$ is defied as follows (see \cite{Ali 203e-conv}): if $x, y \in {\rm dom}\,f$, 
\[
e_{f}(x, y) = \inf\left\{ a \in \mathbb{R}_{+}: \, \frac{f(  tx + (1-t)y) - t f(x) - (1-t) f(y)}{t (1 - t)} \leq a, \, \forall \, t \in \, ]0, 1[ \right\},
\]
and otherwise $e_{f}(x, y) = + \infty$.

Note that $e_{f}$ is a nonnegative symmetric error bifunction such that $e_{f} 
(x, x) = 0$ for all $x \in {\rm dom}\,f$ and it represents the minimal error bifunction 
$e$ for which $f$ is $e$-convex (see \cite{Ali 203e-conv}). 

\subsection{Asymptotic Analysis}

Given a nonempty set $K$ in $\mathbb{R}^{n}$, we define the asymptotic cone 
of $K$ as
$$K^{\infty} := \left\{ u \in \mathbb{R}^{n}: ~ \exists ~ t_n \rightarrow + \infty, ~ \exists ~ x_n \in K \ \text{ such that } ~ \frac{x_n}{t_n} \rightarrow u \right\}.$$
In the case when $K$ is closed and convex, it is well known that 
(see \cite{AT})
\begin{equation}\label{eq:convexcone}
 K^{\infty} = \{ u \in \mathbb{R}^{n}: ~ x_0 + \lambda u \in K, ~ \forall ~ \lambda 
 \geq 0 \},
\end{equation}
where \eqref{eq:convexcone} does not depend on the choice of $x_{0} \in K$.

The asymptotic/recession function $f^{\infty}:\mathbb{R}^{n} \rightarrow 
\overline{\mathbb{R}}$ of a proper function $f$, is defined by 
$$ \mathrm{epi}\,f^{\infty} :=(\mathrm{epi}\,f)^{\infty}.$$
It follows that (see \cite[Theorem 2.5.1]{AT})
\begin{equation}\label{didieu}
 f^{\infty} (u) = \inf\left\{ \liminf_{k \rightarrow + \infty} \frac{f (t_k u_k)}{t_k}: ~ 
 t_k \to +\infty, ~ u_k \rightarrow u \right\}. 
\end{equation}
Moreover, when $f$ is lower semicontinuous (lsc henceforth) and convex, 
 \cite[Proposition 2.5.2]{AT} implies that
\begin{equation}\label{func:first}
 f^{\infty} (u) = \sup_{t > 0} \frac{f(x_{0} + tu) - f(x_{0})}{t} = \lim_{t \rightarrow 
 + \infty} \frac{f(x_{0} + tu) - f(x_{0})}{t},
\end{equation}
and all expressions in \eqref{func:first} do not depend on the choice of the point 
$x_{0} \in {\rm dom}\,f$.

A proper function $f$ is said to be coercive if $f(x) \rightarrow + \infty$ as 
$\lVert x \rVert \rightarrow + \infty$, or equivalently, if $S_{\lambda} (f)$ is 
bounded for all $\lambda \in \mathbb{R}$. Note that if $f^{\infty} (u) > 0$ for 
all $u \neq 0$, then $f$ is coercive. In addition, if $f$ is proper, lsc and convex, 
then $f$ is coercive if and only if (see \cite[Proposition 3.1.3]{AT})
\begin{equation}\label{char:convex}
 f^{\infty} (u) > 0,  ~ \forall ~ u \neq 0 ~ \Longleftrightarrow ~ {\rm argmin}_{\mathbb{R}^{n}}\,f \neq \emptyset {\rm ~ and ~ compact.}
\end{equation}

Relation \eqref{char:convex} shows the importance of the asymptotic function $f^{\infty}$ for dealing with convex functions. But for dealing with nonconvex 
functions, the asymptotic function $f^{\infty}$ is not good enough. Indeed, the continuous and quasiconvex function $f(x) = \sqrt{\lvert x \rvert}$, thus ${\rm argmin}_{\mathbb{R}}\,f = \{0\}$ (nonempty and compact), but $f^{\infty} \equiv 0$, i.e., $f^{\infty}$ does not provide any information on its set of minimizers.

For this reason, the authors in \cite{FFB-Vera,flo-had-lara-2,HLM,Iusem-Lara-5,lara-lopez,Penot} introduced and studied several generalized asymptotic functions for dealing with quasiconvex functions. In this direction, we can mention that the $q$-asymptotic function is one of the best asymptotic function for obtaining information from the infinity. Let us recall that the $q$-asymptotic function of any proper function $f$ is defined as 
\begin{equation}\label{asint:supq}
 f^{\infty}_{q} (u) := \sup_{x \in {\rm dom}\,f} \sup_{t > 0} \frac{f(x + t u) - f(x)}{t}.
\end{equation} 
Clearly, $f^{\infty}_{q} \geq f^{\infty}$, and equality holds when $f$ is convex. If 
$f$ is quasiconvex (resp. lsc), then $f^{\infty}_{q}$ is quasiconvex (resp. lsc).
Furthermore, given a lsc qua\-si\-con\-vex function $f$, \cite[Theorem 4.7]{FFB-Vera}
implies that
\begin{equation}\label{char:qasymptotic}
 f^{\infty}_{q} (u) > 0, ~ \forall ~ u \neq 0 ~ \Longleftrightarrow ~ {\rm argmin}_{ \mathbb{R}^{n}}\,f \neq \emptyset ~ {\rm and ~ compact}. 
\end{equation}
Note that $f^{\infty}_{q} (u) > 0$ for all $u \neq 0$ does not imply the coercivity 
of $f$ (see \cite[Exam\-ple 5.6]{FFB-Vera}), i.e., relation \eqref{char:qasymptotic} 
goes beyond convexity.

For a further study on $e$-convexity, generalized convexity, and generalized asymptotic functions we refer to \cite{Ali 2018, Ali 2021, Ali 203e-conv, ALi-Mako, Ansari-book, AE, ADSZ, CMA, FFB-Vera, HKS, Iusem-Lara-5,lara, Lara-9,lara-lopez,Penot,Soltan} and re\-fe\-ren\-ces therein

\section{Generalized Quasiconvex Functions}\label{sec:03}

In this section, we develop the usual properties of $e$-quasiconvex functions by extending the ones for $e$-convex and quasiconvex functions. 

\begin{definition}{\rm (\cite[Definition 3.1]{Huang 24})}\label{edf:e-qcx}
 Let $f: \mathbb{R}^{n} \rightarrow \mathbb{R} \cup \left \{+ \infty \right\}$ be a function and $e$ be an error bifunction such that ${\rm dom}\,f \times {\rm dom}\,f \subseteq {\rm dom}\,e$. We say that $f$ is $e$-quasiconvex if for all $x, y \in \mathbb{R}^{n}$ and all $t \in \, ]0, 1[$, we have
\begin{equation}\label{e:qcx}
 f(tx+ (1-t)y) \leq \max\{ f(x), f(y)\} + t(1-t) e(x, y). 
\end{equation}
\end{definition}
Clearly, every $e$-convex function is $e$-quasiconvex and every quasiconvex function is $e$-quasiconvex for every error function $e$, while the converse statements do not hold in general. As a consequence, and since every Lipschitz function is $e$-convex (see \cite[Proposition 18]{Ali 2021}), the class of $e$-quasiconvex functions is very large. 

Summarizing, 
 \begin{align*}
  \begin{array}{ccccccc}
  \, & \, & {\rm convex} & \Longrightarrow & {\rm quasiconvex}  \notag \\  
  \, & \, & \Downarrow & \, & \Downarrow \notag \\
  {\rm Lipschitz ~ Functions} & \Longrightarrow & {\rm e-convex} & \Longrightarrow & {\rm e-quasiconvex} \notag 
  \end{array} 
 \end{align*}
While the reverse implications does not hold in general as we show below.

\begin{example}
 \begin{itemize}
 \label{ex.3}
  \item[$(i)$] Let $f: \mathbb{R} \rightarrow \mathbb{R}$ be given by $f(x) = 
 \sqrt{\lvert x \rvert}$. Since $f$ is quasiconvex, it is $e$-quasiconvex for every error function $e$,  however, $f$ is not $e$-convex for any error bifunction $e$. Indeed, if $f$ is $e$-convex for some $e$, then $e_{f}$ is finite on ${\rm dom}\,f \times {\rm dom}\,f$ and it is $e_{f}$-convex. We show that for $x \neq 0$ we have $e_{f}(x, 0) = + \infty$. By definition,
\[
e_{f} (x, y) = \max\left\{  0, \sup_{t \in \, ]0,1[} \frac{f(tx + (1-t)y) - t f(x) - (1-t)
f(y)}{t(1-t)} \right\},
\]
thus,
\begin{align*}
 e_{f} (x, 0) & \geq \sup_{t \in ]0,1[} \frac{f(tx+(1-t)0)-tf(x) - (1-t)f(0)}{t(1-t)} \\
 & = \sup_{t \in ]0,1[}\frac{\sqrt{\lvert tx \rvert} - t\sqrt{\lvert x \rvert}}{t(1-t)} = \sup_{t \in ]0,1[}\frac{\sqrt{\lvert x \rvert}}{\sqrt{t}(1+\sqrt{t})}= + \infty.
\end{align*}

 \item[$(ii)$] Define $f: \mathbb{R} \rightarrow \mathbb{R}$ by $f(x) = - x^{2}$. 
 Clearly, $f$ is not quasiconvex, but it is $e$-quasiconvex with error bifunction 
 $e(x,y) = (x-y)^{2}$.
Indeed, we show that $f$ is $e$-convex. For a given $x,y\in%
\mathbb{R}
$ and $t\in \left[  0,1\right]  $ we have%
\begin{align*}
& f\left(  tx+\left(  1-t\right)  y\right)  -tf\left(  x\right)  -\left(
1-t\right)  f\left(  y\right)  \\
& =t\left(  x^{2}-\left(  tx+\left(  1-ty\right)  \right)  ^{2}\right)
+\left(  1-t\right)  \left(  y^{2}-\left(  tx+\left(  1-ty\right)  \right)
^{2}\right)  \\
& =t(1-t)(x-y)[(1+t)x+(1-t)y-tx+(t-2)y]\\
& =t(1-t)(x-y)^{2}.
\end{align*}
\end{itemize}
\end{example}

The next remark shows an instance of a function which is not $e$-quasiconvex for any error bifunction $e$.

\begin{remark}
Define $f: \mathbb{R} \rightarrow \mathbb{R}$ by 
$$ f (x) = \left\{
\begin{array}{ccl}
 1 & {\rm if} & x \in \, ]- \infty, - 3] \cup [1, + \infty[, \\
 \lvert x + 2 \rvert & {\rm if} & x \in \, ]-3,-1[, \\
 \sqrt{\lvert x \rvert} & {\rm if } & x \in [-1, 1].
\end{array}
\right.
$$
Note that $f$ is not $e$-quasiconvex for any error bifunction $e$. Indeed, su\-ppo\-se 
for the contrary that there exists an error bifunction $e: \mathbb{R} \times \mathbb{R}
\rightarrow \mathbb{R}$ for which $f$ is $e$-quasiconvex. By choosing $x=-2$, 
$y=0$ and $t = \frac{1}{n}$ we get $f(-2) = f(0) = 0$, and by definition of
 $e$-quasiconvexity, 
\[
f\left(  -\frac{2}{n}\right)  \leq \max\{  f(0), f(-2)\} + \frac{1}{n}\left(1 - \frac{1}{n}\right) e(0, -2).
\]
Consequently, for $n\geq2$ we derive
\begin{align*}
\sqrt{\frac{2}{n}}\leq \frac{1}{n} \left(  1-\frac{1}{n}\right) e (0, - 2) \,
 \Longleftrightarrow \, \sqrt{2} \leq \frac{1}{\sqrt{n}} (1 - \frac{1}{n}) 
 e (0, - 2).
\end{align*}
Letting $n \rightarrow \infty$, we obtain $\sqrt{2} \leq 0$, a contradiction. Therefore, 
$f$ is not $e$-quasiconvex for any error bifunction $e$.
\end{remark}

We recall the following result from \cite{Huang 24}, which will be used in the sequel.

\begin{theorem}{\rm (\cite[Theorem 3.6]{Huang 24})}\label{Th 3.6. Huang} 
 Let $e: \mathbb{R}^{n} \times \mathbb{R}^{n} \rightarrow \mathbb{R} \cup \left\{ + \infty \right\}$ be an error bifunction such that ${\rm dom}\,f \times {\rm dom}\,f \subseteq {\rm dom}\,e$ and that for every $x, y \in {\rm dom}\,f$, $e(\cdot, y)$ is convex and $e(x, x) = 0$. Suppose that ${\rm int}\left( {\rm dom}\,f \right) \neq \emptyset$ and $f$ is Fr\'{e}chet differentiable on the nonempty and open set $\Omega \subset {\rm dom}\,f$. If $f$ is $e$-quasiconvex, then for every $x, y \in \Omega$, the following implication holds
\begin{equation}
 f(y) \leq f(x) \, \implies \, \langle \nabla f(x), y - x \rangle \leq e(x, y).
\label{eq(conv=mon)}%
\end{equation}
Conversely, if relation \eqref{eq(conv=mon)} holds, then $f$ is a $2e$-quasiconvex function.
\end{theorem}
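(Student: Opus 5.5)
Both directions go through the one-variable restriction of $f$ along the segment joining $x$ and $y$. For the direct implication, fix $x,y\in\Omega$ with $f(y)\le f(x)$. For $t\in\,]0,1[$, apply \eqref{e:qcx} to the points $y$ and $x$ with weight $t$ on $y$:
\[
f(x+t(y-x)) \le \max\{f(y),f(x)\} + t(1-t)e(y,x) = f(x) + t(1-t)e(x,y),
\]
where symmetry of $e$ was used. Subtract $f(x)$, divide by $t>0$, and let $t\downarrow 0$. Since $\Omega$ is open, $x+t(y-x)\in\Omega$ for small $t$, and Fréchet differentiability at $x$ gives
\[
\langle \nabla f(x),y-x\rangle \le \lim_{t\downarrow0}(1-t)e(x,y)=e(x,y).
\]
This direction uses neither the convexity of $e(\cdot,y)$ nor $e(x,x)=0$.

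For the converse, assume \eqref{eq(conv=mon)} holds on $\Omega$; I will read the conclusion as $2e$-quasiconvexity for points of a convex $\Omega$. Fix $x,y$ and set
\[
\varphi(t)=f(tx+(1-t)y),\qquad z_t=tx+(1-t)y,\qquad M=\max\{f(x),f(y)\}.
\]
Argue by contradiction: suppose some $t_0\in\,]0,1[$ satisfies $\varphi(t_0)>M+2t_0(1-t_0)e(x,y)$. Compare $\varphi$ with the auxiliary function
\[
\psi(t)=\varphi(t)-M-2t(1-t)e(x,y).
\]
It satisfies $\psi(0)\le0$, $\psi(1)\le0$ and $\psi(t_0)>0$, so $\psi$ attains a positive maximum at some interior $\bar t$, where $\psi'(\bar t)=0$, that is,
\[
\varphi'(\bar t)=2(1-2\bar t)e(x,y).
\]
On the other hand $f(z_{\bar t})>M\ge f(x),f(y)$, so \eqref{eq(conv=mon)} applies at the base point $z_{\bar t}$ toward both $x$ and $y$. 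Since $x-z_{\bar t}=(1-\bar t)(x-y)$ and $y-z_{\bar t}=-\bar t(x-y)$, this gives
\[
(1-\bar t)\varphi'(\bar t)\le e(z_{\bar t},x),\qquad -\bar t\varphi'(\bar t)\le e(z_{\bar t},y).
\]

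The crux is to bound these errors by $e(x,y)$. Convexity of $e(\cdot,x)$ and $e(x,x)=0$ give
\[
e(z_{\bar t},x)\le t e(x,x)+(1-t)e(y,x)=(1-\bar t)e(x,y),
\]
and similarly $e(z_{\bar t},y)\le \bar t\,e(x,y)$. Hence
\[
\varphi'(\bar t)\le e(x,y)\quad\text{and}\quad -\varphi'(\bar t)\le e(x,y),
\]
so $|\varphi'(\bar t)|\le e(x,y)$. Inserting the value of $\varphi'(\bar t)$ yields only $|2(1-2\bar t)|e\le e$, which is not contradictory by itself.

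This is the main obstacle, and I would refine the choice of the comparison point. Rather than the global maximizer of $\psi$, take $\bar t$ to be the maximizer of $\psi$ on the subinterval where $\varphi>M$ lying on the side where the sign of $\psi'$ conflicts with the derivative bound. Alternatively, apply a mean-value argument on each of $[0,t_0]$ and $[t_0,1]$ separately, using points where $\varphi$ exceeds $M$ and obtaining integrated bounds
\[
\varphi(t_0)-M\le \int \varphi' \le \int \text{(error)}.
\]
With the pointwise bounds $\varphi'(s)\le \frac{1-s}{1-s}e$ type estimates, integrating from the last time $\varphi\le M$ up to $t_0$ should give
\[
\varphi(t_0)-M\le \min\{t_0,1-t_0\}\,e(x,y)\le 2t_0(1-t_0)e(x,y),
\]
using $\min\{t,1-t\}\le 2t(1-t)$, which yields the contradiction. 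The factor $2$ in the statement is explained precisely by this elementary inequality. One caveat remains: without continuity of $\varphi'$ the integration step needs $\varphi$ to be absolutely continuous, which holds on $\Omega$ by differentiability via a Denjoy-type or mean-value argument.
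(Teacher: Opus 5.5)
The paper gives no proof of this theorem (it is quoted from Huang--Fang), so there is nothing internal to compare against. Judged on its own, your forward direction is correct, and you are right that it uses only the symmetry of $e$ and differentiability at $x$. Your converse is also correct once you commit to the second route and drop the first.

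\textbf{The derivative bound holds more generally.} The bound $|\varphi'(s)|\le e(x,y)$ that you derive at $\bar t$ uses only $\varphi(s)>M$. It therefore holds at every $s\in\,]0,1[$ with $\varphi(s)>M$, so the auxiliary $\psi$ and its maximizer play no role.

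\textbf{The mean value step.} Fix $t_0$ with $\varphi(t_0)>M$; if no such $t_0$ exists there is nothing to prove, since $e\ge 0$. Let $s_0=\max\{s\in[0,t_0]:\varphi(s)\le M\}$. This set is closed and nonempty because $\varphi(0)=f(y)\le M$. Then $\varphi(s_0)\le M$ and $\varphi>M$ on $]s_0,t_0]$. The mean value theorem gives $\xi\in\,]s_0,t_0[$ with $\varphi(t_0)-M\le\varphi'(\xi)(t_0-s_0)\le t_0\,e(x,y)$. Symmetrically, set $s_1=\min\{s\in[t_0,1]:\varphi(s)\le M\}$ and use $\varphi'\ge -e(x,y)$ on $]t_0,s_1[$. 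This gives $\varphi(t_0)-M\le(1-t_0)\,e(x,y)$. Finally, $\min\{t,1-t\}\le 2t(1-t)$ yields $2e$-quasiconvexity; in fact you obtain the sharper bound with $\min\{t_0,1-t_0\}$.

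\textbf{Drop the integration caveat.} The mean value theorem, not an integral, is the right tool here, and it makes your final caveat unnecessary. That caveat is wrong as stated: an everywhere differentiable function need not be absolutely continuous. For example, $s^2\sin(1/s^2)$ is differentiable everywhere but not of bounded variation near $0$. So you should not appeal to absolute continuity ``by differentiability''.

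\textbf{Scope of the converse.} You correctly flag that the converse needs the whole segment $[x,y]$ inside $\Omega$. This is required both to apply the hypothesis at the base points $z_s$ and to have $\varphi$ continuous on $[0,1]$. The conclusion is therefore $2e$-quasiconvexity on a convex $\Omega$, for instance $\Omega=\mathrm{dom}\,f$ open and convex. It is not $2e$-quasiconvexity on all of $\mathbb{R}^n$ in the sense of the paper's definition, since nothing is known at points of $\mathrm{dom}\,f$ outside $\Omega$. This is an imprecision in the statement as quoted, not in your argument.
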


The following result outlines basic properties of $e$-quasiconvex functions. Its easy proof is omitted. We note that, in the following proposition, part (e) was given in (\cite[Proposition 3.12]{Huang 24} and we included it for the sake of completeness.

\begin{proposition}\label{properties}
\emph{(a)} Suppose that $f$ is $e$-quasiconvex and $e^{\prime}$ is an error bifunction such that ${\rm dom}\,f \times {\rm dom}\,f \subseteq {\rm dom}\,e^{\prime}$. If $e \leq e^{\prime}$, then $f$ is $e^{\prime}$-quasiconvex.

\emph{(b)} Let $f: \mathbb{R}^{n} \rightarrow \mathbb{R} \cup \left\{ + \infty \right\}$ be a function. Then $f$ is quasiconvex if and only if it is $e$-quasiconvex for every error bifunction $e$ such that ${\rm dom}\,f \times {\rm dom}\,f \subseteq {\rm dom}\,e$.

\emph{(c)} For each $e$-quasiconvex function $f$, ${\rm dom}\,f$ is convex.

\emph{(d)} Let $\{f_{i}\}_{i \in I}$ be a family of $e$-quasiconvex functions and 
 $g(x) := \sup_{i \in I} f_{i} (x)$ be proper. Then $g(x)$ is $e$-quasiconvex. 
    
\emph{(e)} Suppose $f$ is $e$-quasiconvex, $g$ is $e^{\prime}$-quasiconvex and for all $x, y \in {\rm dom}\,f \cap {\rm dom}\,g$ we have $(f(x)-f(y)(g(x)-g(y) )\geq 0$. Then $f+g$ is ($e + e^{\prime})$-quasiconvex. In particular, if $g$ is a quasiconvex function, then $f+g$ is $e$-quasiconvex.
\end{proposition}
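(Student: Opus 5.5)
Each part of Proposition~\ref{properties} follows by unwinding Definition~\ref{edf:e-qcx}. For \emph{(a)}, since $t(1-t)\ge 0$ for $t\in\,]0,1[$, the bound $t(1-t)e(x,y)\le t(1-t)e'(x,y)$ passes directly into \eqref{e:qcx}. For \emph{(b)}, necessity is immediate because $e\ge 0$. For sufficiency I would take $e\equiv 0$, which is a legitimate error bifunction: it is finite everywhere, nonnegative and symmetric. The $0$-quasiconvex inequality for $t\in\,]0,1[$, together with the trivial endpoint cases $t\in\{0,1\}$, is exactly quasiconvexity. For \emph{(c)}, take $x,y\in{\rm dom}\,f$. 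The hypothesis ${\rm dom}\,f\times{\rm dom}\,f\subseteq{\rm dom}\,e$ gives $e(x,y)<+\infty$, so the right-hand side of \eqref{e:qcx} is finite. Hence $f(tx+(1-t)y)<+\infty$, i.e.\ the whole segment lies in ${\rm dom}\,f$.

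For \emph{(d)}, fix $x,y$ and $t$. For each $i$ I would write
\[
f_i(tx+(1-t)y)\le \max\{f_i(x),f_i(y)\}+t(1-t)e(x,y)\le \max\{g(x),g(y)\}+t(1-t)e(x,y),
\]
and then take the supremum over $i\in I$. The only subtlety is the domain condition: ${\rm dom}\,g=\bigcap_i{\rm dom}\,f_i$. So whenever $x,y\in{\rm dom}\,g$ they lie in each ${\rm dom}\,f_i$, and if either point is outside ${\rm dom}\,g$ the inequality is trivial with right-hand side $+\infty$. I read the inclusion ${\rm dom}\,g\times{\rm dom}\,g\subseteq{\rm dom}\,e$ as inherited from any single $f_i$.

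For \emph{(e)}, fix $x,y\in{\rm dom}\,f\cap{\rm dom}\,g$ (otherwise the inequality is trivial) and let $z=tx+(1-t)y$. The comonotonicity hypothesis says $f(x)-f(y)$ and $g(x)-g(y)$ have the same sign. So without loss of generality $f(x)\ge f(y)$ and $g(x)\ge g(y)$, and both maxima are attained at the same point $x$. Adding the two defining inequalities then gives
\[
(f+g)(z)\le f(x)+g(x)+t(1-t)\bigl(e(x,y)+e'(x,y)\bigr)=\max\{(f+g)(x),(f+g)(y)\}+t(1-t)(e+e')(x,y).
\]
The final equality holds because $f(x)+g(x)\ge f(y)+g(y)$.

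The \emph{in particular} clause follows by taking $e'\equiv 0$ via part (b). The comonotonicity hypothesis is still required there. This alignment of the maxima is the only step that is not purely mechanical, so it is the one I would write out carefully.
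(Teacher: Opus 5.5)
Your proposal is correct and is the direct verification from Definition~\ref{edf:e-qcx} that the paper leaves out as easy. For (e), the key point is indeed that comonotonicity makes both maxima attained at the same point of $\{x,y\}$.

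There is one small slip in (d): in general only ${\rm dom}\,g\subseteq\bigcap_{i}{\rm dom}\,f_i$ holds, not equality. For example, $f_i(x)=i\lvert x\rvert$, $i\in\mathbb{N}$, gives ${\rm dom}\,g=\{0\}$. Your argument only uses this inclusion, so nothing breaks.
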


In light of the above results, the theoretical framework relies on the choice of the map $e$. The subsequent results address this issue by discussing how to select the minimal error function and by providing an explicit formula for the map $e$. In particular, we analyze a lower critical bound that $e$ must satisfy. This analysis significantly strengthens the robustness of the proposed notion of $e$-quasiconvexity.

Given a function $f: \mathbb{R}^{n} \rightarrow \mathbb{R} \cup \left\{+ \infty \right\}$, we define the map $\widetilde{e}_{f}: \mathbb{R}^{n} \times \mathbb{R}^{n}\rightarrow \mathbb{R}_{+} \cup \left\{+ \infty \right\}$ as follows: For $x, y \in {\rm dom}\,f$
$$
\widetilde{e}_{f} (x, y) = \inf \left \{  a \in \mathbb{R}_{+}: \, \frac{f(tx + (1 - t) y)  -\max \left\{ f(x), f(y)  \right\}}{t(1 - t)} \leq a, ~ \forall ~ t \in \, ]0, 1[ \right\},
$$
and $\widetilde{e}_{f} (x, y) = + \infty$ if $(x, y) \notin {\rm dom}\,f \times {\rm dom}\,f$.

Clearly, $\widetilde{e}_{f}$ is a nonnegative symmetric error bifunction and $\widetilde{e}_{f} (x, x) = 0$ for all $x \in {\rm dom}\,f$. Hence, if $f$ is $e^{\prime}$-quasiconvex for some error bifunction $e^{\prime}$, then
\[
\widetilde{e}_{f}  =\inf \left\{e: \, f\text{ is }e\text{-quasiconvex}\right\},
\]
where the infimum is taken pointwise. In this case, $\widetilde{e}_{f}$ is finite on ${\rm dom}\,f \times {\rm dom}\,f$ and $f$ is
$\widetilde{e}_{f}$-quasiconvex. Note that $\widetilde{e}_{f}$ is the minimal $e$ such that $f$ is $e$-quasiconvex.

The next result provides an alternative formula for $\widetilde{e}_{f}$. 

\begin{proposition}
Suppose that $f$ is $e$-quasiconvex for some error bifunction $e$. Then for every $x, y \in {\rm dom}\,f$, we have
\begin{equation}\label{explisit}
 \widetilde{e}_{f} (x, y) = \max\left\{0, \sup_{t \in \, ]0,1[} \frac{f(tx + (1-t)y) - \max\{f(x), f(y)\}}{t(1-t)} \right\}.
\end{equation}
\end{proposition}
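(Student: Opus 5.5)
Fix $x, y \in {\rm dom}\,f$ and put
\[
\phi(t) := \frac{f(tx + (1-t)y) - \max\{f(x), f(y)\}}{t(1-t)}, \qquad t \in \, ]0,1[,
\]
and $s := \sup_{t \in ]0,1[} \phi(t) \in \mathbb{R} \cup \{\pm\infty\}$. I will write $A := \{a \in \mathbb{R}_{+}: \phi(t) \leq a ~ \forall ~ t \in ]0,1[\}$, so that $\widetilde{e}_{f}(x,y) = \inf A$ by definition. The goal is to show $\inf A = \max\{0, s\}$. The plan is to identify $A$ as an explicit half-line and then read off its infimum.

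First, I will show that $s < +\infty$, so that $A \neq \emptyset$. Since $f$ is $e$-quasiconvex and ${\rm dom}\,f \times {\rm dom}\,f \subseteq {\rm dom}\,e$, the value $e(x,y)$ is finite. Inequality \eqref{e:qcx} is exactly the statement $\phi(t) \leq e(x,y)$ for all $t \in ]0,1[$. Hence $s \leq e(x,y) < +\infty$. I also note that $f(tx+(1-t)y)$ is finite by Proposition \ref{properties}(c), so $\phi$ is real-valued. Moreover $s > -\infty$ because $]0,1[$ is nonempty.

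Next, I will show that $A = [\max\{0,s\}, +\infty[$. A number $a$ belongs to $A$ if and only if $a \geq 0$ and $a \geq \phi(t)$ for every $t$. The second condition is equivalent to $a \geq s$, by the definition of the supremum. Thus $a \in A$ if and only if $a \geq \max\{0, s\}$. Since $s$ is finite, $\max\{0,s\}$ is a finite nonnegative number lying in $A$. It follows that $\inf A = \min A = \max\{0, s\}$, which is \eqref{explisit}.

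There is no real obstacle here. The only points needing care are the finiteness of $s$, which is where the $e$-quasiconvexity hypothesis is used, and the convexity of ${\rm dom}\,f$, which keeps the numerator of $\phi$ finite. The rest is the elementary identity $\inf\{a \geq 0 : a \geq s\} = \max\{0,s\}$.
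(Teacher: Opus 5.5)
Your proof is correct, but it takes a more direct route than the paper.

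The paper uses $e$-quasiconvexity to show that $\max\{0,s\} \le e(x,y)$ for \emph{every} error bifunction $e$ for which $f$ is $e$-quasiconvex. It then concludes by appealing to the minimality of $\widetilde{e}_{f}$, i.e., to the fact, stated in the paper without proof, that $\widetilde{e}_{f}$ is the pointwise infimum of all such $e$. Closing that argument tacitly requires two further facts: that $f$ is $\widetilde{e}_{f}$-quasiconvex, and that the right-hand side of the identity is itself an admissible error bifunction. The paper does not check the second one.

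You instead compute the infimum in the definition of $\widetilde{e}_{f}$ outright. You identify the set of admissible constants $a$ as the half-line $[\max\{0,s\},+\infty[$, and you use the hypothesis only to guarantee $s<+\infty$. This makes your argument self-contained and avoids the minimality characterization entirely. It also shows the identity is essentially definitional, with $e$-quasiconvexity needed only for finiteness.

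The paper's route does make explicit that the right-hand side is a lower bound for every admissible $e$. That also follows at once from your observation $s \le e(x,y)$ together with $e \ge 0$.
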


\begin{proof}
According to the definition of $e$-quasiconvexity, for every $x, y \in {\rm dom}\,f$, for all $t \in \, ]0,1[$  we have
$$f(tx+(1-t)y) -\max\{f(x), f(y)\} \leq t(1-t) e(x, y).$$
Multiplying both sides of the above inequality by $\frac{1}{t(1-t)}$ and taking the supremum of the left-hand side of the above inequality for $t \in \, ]0, 1[$,  we obtain 
$$\sup_{t \in ]0,1[} \frac{f(tx+ (1-t)y)- \max\{f(x), f(y)\}}{t(1-t)} \leq e(x, y).$$
From the above inequality and non-negativity of $e$ we get
$$\max\left\{0,\sup_{t \in \, ]0,1[} \frac{f(tx+(1-t)y) - \max\{f(x), f(y)\}}{t(1-t)}\right\} \leq e(x, y).$$
Now from the minimality $\widetilde{e}_{f}$, we conclude (\ref{explisit}).
\end{proof}

The following result is useful for checking  $e$-quasiconvexity.

\begin{proposition}\label{fornot:eqcx}
Suppose that $f: \mathbb{R}^{n} \rightarrow \mathbb{R} \cup \left \{+\infty \right\}$ is a function for which the equation $f(x) = 0$ has at least two distinct roots, denoted as $x_{1}$ and $x_{2}$. Then the following assertions hold:
\begin{itemize}
 \item[$(a)$] If
 \begin{equation}\label{ness-eq}
  \underset{t \rightarrow 0^{+}}{\lim \sup} \frac{f(tx_{1} + (1-t)x_{2})}{t} = + \infty,
 \end{equation}
 then there is no error bifunction $e$ for which $f$ is $e$-quasiconvex.

 \item[$(b)$] If $f$ is $e$-quasiconvex for some error bifunction $e$, then
 \[
 \underset{t \rightarrow 0^{+}}{\lim \sup} \frac{f(t x_{1} + (1-t) x_{2})}{t} \leq e(x_{1}, x_{2}).
 \]
 \end{itemize}
\end{proposition}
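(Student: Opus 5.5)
The plan is to prove part $(b)$ first and then obtain part $(a)$ as an immediate contrapositive.

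For $(b)$, suppose $f$ is $e$-quasiconvex. Since $f(x_{1}) = f(x_{2}) = 0$, both points lie in ${\rm dom}\,f$, so $(x_{1}, x_{2}) \in {\rm dom}\,f \times {\rm dom}\,f \subseteq {\rm dom}\,e$ and $e(x_{1}, x_{2})$ is a finite nonnegative number. We apply inequality \eqref{e:qcx} with $x = x_{1}$, $y = x_{2}$ and $t \in \, ]0,1[$. Since $\max\{f(x_{1}), f(x_{2})\} = 0$, this gives
\[
f(tx_{1} + (1-t)x_{2}) \leq t(1-t)\, e(x_{1}, x_{2}).
\]
Dividing by $t > 0$, we obtain
\[
\frac{f(tx_{1} + (1-t)x_{2})}{t} \leq (1-t)\, e(x_{1}, x_{2}) \quad \text{for all } t \in \, ]0,1[.
\]
Taking $\limsup_{t \to 0^{+}}$ on both sides, and noting that the right-hand side converges to $e(x_{1}, x_{2})$, yields the claimed bound.

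For $(a)$, argue by contradiction. If $f$ were $e$-quasiconvex for some error bifunction $e$, then by $(b)$ the $\limsup$ in \eqref{ness-eq} would be bounded above by the finite number $e(x_{1}, x_{2})$. This contradicts the assumption that the $\limsup$ equals $+\infty$.

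The only point needing care is the finiteness of $e(x_{1}, x_{2})$. It follows from the standing domain inclusion ${\rm dom}\,f \times {\rm dom}\,f \subseteq {\rm dom}\,e$ in Definition \ref{edf:e-qcx}, together with the fact that $x_{1}, x_{2} \in {\rm dom}\,f$ because $f$ vanishes there. Once this is noted, there is no real obstacle; the argument is a direct unwinding of the definition, in the same spirit as the preceding Remark.
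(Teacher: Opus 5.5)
Your proof is correct and follows the paper's argument: you set $x=x_{1}$, $y=x_{2}$ in the $e$-quasiconvexity inequality, use $\max\{f(x_{1}),f(x_{2})\}=0$, divide by $t$ and pass to the $\limsup$. The paper proves (a) by contradiction first and then gets (b) from the same inequality, so deriving (a) from (b) is only a reordering, and your explicit check that $e(x_{1},x_{2})<+\infty$ via ${\rm dom}\,f\times{\rm dom}\,f\subseteq{\rm dom}\,e$ is the point the paper uses implicitly when it reaches $e(x_{1},x_{2})\geq+\infty$.
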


\begin{proof}
$(a)$: For the sake of contradiction, assume that $f$ is $e$-quasiconvex for some error bifunction $e$. Then
\begin{align}
 & f(t x_{1} + (1-t) x_{2}) \leq \max\left\{ f(x_{1}), f(x_{2})\right\} + t (1- t)  e(x_{1}, x_{2}) = t(1-t) e(x_{1}, x_{2}) \notag \\
 & \hspace{2.6cm} \Longrightarrow \, \frac{f(t x_{1} + (1-t) x_{2})}{t} \leq (1-t) e(x_{1}, x_{2}). \label{lim-eq}
\end{align}
By taking the limit superior from the above inequality when $t \rightarrow 0^{+}$, and using \eqref{ness-eq} we deduce that $e(x_{1}, x_{2}) \geq + \infty$, a contradiction.

$(b)$: Take the limit superior to \eqref{lim-eq} and conclude the desired inequality.
\end{proof}

\begin{example}
Define $f: \mathbb{R} \rightarrow \mathbb{R}$ by $f(x) = \sin(\sqrt{\lvert x \rvert})$. Clearly, $f$ is not quasiconvex. Let us check if $f$ is $e$-quasiconvex or not. Since $f(\pi^{2}) = f(0) = 0$ and the $\underset{t \rightarrow 0^{+}}{\lim \sup} \frac{f(t \pi^{2})}{t} = + \infty$, it follows from Proposition \ref{fornot:eqcx} that $f$ is not $e$-quasiconvex for any error bifunction $e$.
\end{example}

In the following result, we characterize $e$-convex functions in terms of the sum 
of $e$-quasiconvex with linear functions, a result which extends the 
cha\-rac\-te\-ri\-za\-tion provided in \cite{ACJ 94} for convex and quasiconvex 
functions.

\begin{proposition}\label{char:eqcx}
 A function $f: \mathbb{R}^{n} \rightarrow \mathbb{R} \cup \left\{ +\infty \right\}$ if $e$-convex if and only if for all $x^{\ast} \in \mathbb{R}^{n}$, the function $f + x^{\ast}$ is $e$-quasiconvex.
\end{proposition}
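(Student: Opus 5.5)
The forward direction is immediate. Adding a linear function $x^{\ast}$ (read as $\langle x^{\ast},\cdot\rangle$) preserves $e$-convexity, since a linear function satisfies \eqref{convexineq} with equality. Moreover, every $e$-convex function is $e$-quasiconvex, because $tf(x)+(1-t)f(y)\le \max\{f(x),f(y)\}$. Hence $f+x^{\ast}$ is $e$-quasiconvex for every $x^{\ast}$.

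For the converse, fix $x,y\in{\rm dom}\,f$ (outside the domain the inequality is trivial, since the right-hand side is $+\infty$) and $t\in\,]0,1[$. The idea, following \cite{ACJ 94}, is to choose $x^{\ast}$ so that $g:=f+x^{\ast}$ takes equal values at $x$ and $y$. For $x\neq y$ this is possible by taking
\[
x^{\ast}:=\frac{f(y)-f(x)}{\lVert x-y\rVert^{2}}\,(x-y).
\]
Then $\langle x^{\ast},x\rangle-\langle x^{\ast},y\rangle=f(y)-f(x)$, so $g(x)=g(y)$. For $x=y$ there is nothing to prove, since $e\ge 0$.

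Next, apply the $e$-quasiconvexity of $g$ at $x,y,t$:
\[
f(tx+(1-t)y)+\langle x^{\ast},tx+(1-t)y\rangle\le g(x)+t(1-t)e(x,y).
\]
Because $g(x)=g(y)$, we may replace $g(x)$ by the convex combination $tg(x)+(1-t)g(y)$. By linearity, $\langle x^{\ast},tx+(1-t)y\rangle = t\langle x^{\ast},x\rangle+(1-t)\langle x^{\ast},y\rangle$, so these linear terms cancel on both sides, leaving exactly \eqref{convexineq}. This proves that $f$ is $e$-convex.

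The only delicate point is the choice of $x^{\ast}$ equalizing the values at the two points. This requires $f(x),f(y)$ to be finite, which holds since $x,y\in{\rm dom}\,f$ and $f$ is proper. It also requires that the same bifunction $e$ (independent of $x^{\ast}$) works for every linear perturbation, which the hypothesis guarantees. Finally, the domain condition ${\rm dom}\,f\times{\rm dom}\,f\subseteq{\rm dom}\,e$ is unaffected by adding a finite linear function.
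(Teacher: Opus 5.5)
Your proof is correct and follows essentially the same route as the paper. The forward direction uses the fact that adding a linear function preserves $e$-convexity, which you verify directly where the paper cites a reference. The converse chooses $x^{\ast}$ so that $f+x^{\ast}$ takes equal values at $x$ and $y$, then cancels the linear terms. Your explicit formula for $x^{\ast}$, together with your handling of the cases $x=y$ and $x\notin{\rm dom}\,f$ or $y\notin{\rm dom}\,f$, makes precise points the paper leaves implicit.
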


\begin{proof}
$(\Rightarrow)$: Since $x^{\ast}$ is convex, it fo\-llows from \cite[Proposition 
5-(d)]{Ali 203e-conv} that $f+x^{\ast}$ is $e$-convex, so $e$-quasiconvex.

$(\Leftarrow)$: Suppose that $f + x^{\ast}$ is $e$-quasiconvex for every $x^{\ast} \in \mathbb{R}^{n}$. Hence, for every $x, y \in \mathbb{R}^{n}$, one can choose $x^{\ast} \in \mathbb{R}^{n}$ such that
\begin{equation}\label{lin-f}
 \langle x^{\ast}, y-x \rangle = f(x) - f(y).
\end{equation}
Therefore, $(f + x^{\ast})(x) = (f + x^{\ast}) (y)$. Then for all $t \in [0, 1]$, we get
\begin{align*}
 (f+x^{\ast}) (x) + t(1-t) e(x, y) & \geq (f + x^{\ast}) (ty + (1-t) x) \\
 &  = f (ty + (1-t) x) + \langle x^{\ast}, x \rangle + t \langle x^{\ast}, y-x \rangle \\
 & = f(ty + (1-t)x) + \langle x^{\ast}, x \rangle + t (f(x)-f(y)).
\end{align*}
Hence,
$$f(ty +(1-t)x) \leq t f(y) + (1-t) f(x) + t(1-t) e(x, y), ~ \forall ~ t \in [0, 1],$$
i.e., $f$ is $e$-convex.
\end{proof}

Recall that $x_{0} \in {\rm dom}\,f$ is a global minimizer of $f$ if $f(x_{0}) =
\inf_{\mathbb{R}^{n}} f$, and a (strict) local minimizer of $f$ if there exists 
$\varepsilon > 0$ such that
\begin{align*}
 & ( f(  x_{0}) < f(y), ~ \forall ~ y \in B(x_{0}, \varepsilon)  \text{ and } y 
\neq x_{0} ), \\
 & ~~~ f(x_{0}) \leq f(y), ~ \forall ~ y \in B (x_{0}, \varepsilon) .
\end{align*}

As is well known (see, for instance, \cite[Theorem 1.32]{Ansari-book}), every strict local minimum of a quasiconvex function is also a strict global minimum. While $e$-quasiconvexity offers a useful unification of $e$-convex and quasiconvex functions, it also introduces certain technical difficulties. The following example shows that, in general, a strict local minimum of an $e$-quasiconvex function need not be a strict global minimum.

\begin{example}
Define the function $f: \mathbb{R} \rightarrow \mathbb{R}$ by
\[
 f(x)  =\left \{
\begin{array}{ccl}
 x-4 & {\rm if} &  x>2, \\
 -3x+4 ~~ & {\rm if} & 1 \leq x\leq2, \\
 \left \vert x\right \vert  & {\rm if} & x<1.
\end{array}
\right.
\]

This function is not quasiconvex, but it is $e$-quasiconvex because it is $e$-convex. 
Indeed, note that 
$$f(x) = 2 \lvert \lvert x-1 \rvert - 1 \rvert -\lvert x \rvert.$$ 
According to \cite[Example 2.10]{AZ 2023}, $g(x) = 2 \lvert \lvert x-1 \rvert -1 \rvert$ is $e$-convex with $e(x, y) = 4 \lvert x-y \rvert$. Furthermore, as stated in \cite[Example 4]{Ali 2021}, the function $h(x) = - \lvert x \rvert$ is $e$-convex with $e(x, y) = 2 \lvert x-y \rvert $. Therefore, $f=g+h$ is $e$-convex with $e(x, y) = 6 \lvert x-y \rvert$.
Consequently, $f$ is $e$-quasiconvex with $e (x, y) = 6 \lvert x-y \rvert $. Finally, note that ${\rm argmin}_{\mathbb{R}}\,f = \left \{0, 2\right\}  $, where $0$ is a strict local minimizer, but not a global minimizer of $f$.
\end{example}

%
%

Further theoretical properties, examples, and applications in optimization problems of $e$-quasiconvex functions may be found in \cite{Huang 24}.

\section{Nonemptiness and Compactness of the Solution Set}\label{sec:04}

As noted in the preliminaries, asymptotic/recession tools are extremely useful for 
providing existence of solutions (see \cite{ABM,AT,BBGT}). In this section, we 
develop these results for $e$-quasiconvex functions in the constraint and unconstraint cases. In what follows, we write {\it usc} for upper semicontinuous.

\begin{theorem}\label{exist:result1}
Let $f: \mathbb{R}^{n} \rightarrow \mathbb{R} \cup \left \{  +\infty \right \}$ be a proper lsc and $e$-quasiconvex function such that the error bifunction $e$ is usc and satisfies that 
\begin{equation}\label{assump:e}
 e(\lambda x, \lambda y) = \lambda e(x, y), ~ \forall ~ \lambda \geq 0. 
\end{equation}
If
\begin{equation}\label{suff:cond}
 f^{\infty}_{q} (u) - e(u, 0) > 0, ~ \forall ~ u \neq 0,
\end{equation}
then ${\rm argmin}_{\mathbb{R}^{n}}\,f \neq \emptyset$ and compact. 
\end{theorem}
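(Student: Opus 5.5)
The plan is to show that some nonempty sublevel set $S_{\lambda}(f)$ with $\lambda > \inf_{\mathbb{R}^{n}} f$ is bounded. Once this holds, lower semicontinuity makes $S_{\lambda}(f)$ compact, so $f$ attains its infimum on it. Moreover ${\rm argmin}_{\mathbb{R}^{n}}\,f = S_{\inf f}(f)$ is then a closed subset of a compact set, hence nonempty and compact; this also forces $\inf f$ to be finite. The proof will therefore be by contradiction. Suppose that for a sequence $\lambda_{k} \downarrow \inf_{\mathbb{R}^{n}} f$ (with $\lambda_k \to -\infty$ if $\inf f = -\infty$) every $S_{\lambda_{k}}(f)$ is unbounded. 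Pick $x_{k} \in S_{\lambda_{k}}(f)$ with $\lVert x_{k}\rVert \to +\infty$. Passing to a subsequence, assume $x_{k}/\lVert x_{k}\rVert \to u$ with $\lVert u \rVert = 1$. The goal is to show $f^{\infty}_{q}(u) \leq e(u,0)$, which contradicts \eqref{suff:cond}.

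Fix $x \in {\rm dom}\,f$ and $t>0$, and set $s_{k} := t/\lVert x_{k}\rVert \in \, ]0,1[$ for $k$ large. Consider $z_{k} := s_{k} x_{k} + (1-s_{k}) x$, so that $z_{k} \to x + tu$. Since $x, x_k \in {\rm dom}\,f$, the value $e(x_k,x)$ is finite, and $e$-quasiconvexity \eqref{e:qcx} gives
$$f(z_{k}) \leq \max\{f(x_{k}), f(x)\} + s_{k}(1-s_{k})\, e(x_{k}, x).$$
Since $f(x_{k}) \leq \lambda_{k}$ and $\lambda_{k} \downarrow \inf f \leq f(x)$, the maximum is at most $\max\{\lambda_k, f(x)\} \to f(x)$.

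The key step, and the only delicate one, is controlling the error term. Here I use the homogeneity \eqref{assump:e} with $\lambda = \lVert x_{k}\rVert$, which gives
$$s_{k}\, e(x_{k}, x) = t\, e\!\left(\frac{x_{k}}{\lVert x_{k}\rVert}, \frac{x}{\lVert x_{k}\rVert}\right).$$
The arguments converge to $(u,0)$, so upper semicontinuity of $e$ yields $\limsup_{k} s_{k}(1-s_{k}) e(x_{k},x) \leq t\, e(u,0)$. Note that $e(u,0)$ is finite, since $e$ is usc and real-valued near the sequence, or else \eqref{suff:cond} would be vacuous for that $u$.

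By lower semicontinuity of $f$, taking $\liminf$ along $z_{k}$ then gives $f(x+tu) \leq f(x) + t\, e(u,0)$. Rearranging, $\frac{f(x+tu)-f(x)}{t} \leq e(u,0)$ for all $x\in{\rm dom}\,f$ and all $t>0$. Taking suprema in \eqref{asint:supq} gives $f^{\infty}_{q}(u) \leq e(u,0)$, i.e.\ $f^{\infty}_{q}(u) - e(u,0) \leq 0$ with $u \neq 0$. This contradicts \eqref{suff:cond} and completes the argument.
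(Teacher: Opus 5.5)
Your proof is correct and is essentially the paper's argument. You use the same $e$-quasiconvexity estimate between a fixed $x\in{\rm dom}\,f$ and points $x_k$ with $\lVert x_k\rVert\to+\infty$ at step $t/\lVert x_k\rVert$, and combine it with \eqref{assump:e}, upper semicontinuity of $e$ and lower semicontinuity of $f$ to get $f^{\infty}_{q}(u)\le e(u,0)$. You run this once to obtain a bounded sublevel set $S_{\lambda}(f)$ with $\lambda>\inf_{\mathbb{R}^n}f$, whereas the paper runs it twice, once for a minimizing sequence and once for ${\rm argmin}_{\mathbb{R}^{n}}\,f$.

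Your bound $\max\{f(x_k),f(x)\}\le\max\{\lambda_k,f(x)\}\to f(x)$ is more careful than the paper's claim that $f(x_k)\le f(y)$ holds eventually, which can fail when $f(y)=\inf f$. However, your stated reason for $e(u,0)$ being finite is wrong: upper semicontinuity gives no such thing, and \eqref{suff:cond} would not be "vacuous." If $e(u,0)=+\infty$, then \eqref{suff:cond} simply cannot hold at $u$, so the contradiction is immediate anyway.
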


\begin{proof}
 (Existence): By the definition of infimum, there exists a minimizing sequence $\{x_{k}\} \subseteq 
 {\rm dom}\,f$ such that $f(x_{k}) \rightarrow \inf_{\mathbb{R}^{n}}\,f$. 

 We claim that $\{x_{k}\}_{k}$ is bounded. Indeed, if not, then we may assume that $\lVert x_{k} \rVert \rightarrow + \infty$, thus (take a subsequence if needed) $\frac{x_{k}}{\lVert x_{k} \rVert} \rightarrow u \in ({\rm dom}\,f)^{\infty}$, with $\lVert u \rVert = 1$. Take any $y \in {\rm dom}\,f$ and any $t>0$. Since $\lVert x_{k} \rVert \rightarrow + \infty$, there exists $k_{0} \in \mathbb{N}$ such that
 $$0 < \frac{t}{\lVert x_{k} \rVert} < 1, ~ \forall ~ k \geq k_{0}.$$
 Since $\{x_{k}\}_{k}$ is a minimizing sequence, there exists $k_{1} \in \mathbb{N}$ such that 
 $$f(x_{k}) \leq f(y), ~ \forall ~ k \geq k_{1}.$$ 
 Hence, for every $k \geq k_{2} := \max\{k_{0}, k_{1}\}$, we have
 \begin{align*}
  f\left( \left( 1 - \frac{t}{\lVert x_{k} \rVert} \right) y + \frac{t}{\lVert x_{k} \rVert} x_{k} \right) & \leq \max\{f(x_{k}), f(y)\} + \frac{t}{\lVert x_{k} \rVert} \left( 1 - \frac{t}{\lVert x_{k} \rVert} \right) e(x_{k}, y) \\
   & = f(y) + t \left( 1 - \frac{t}{\lVert x_{k} \rVert} \right) e\left(\frac{x_{k}}{\lVert x_{k} \rVert}, \frac{y}{\lVert x_{k} \rVert} \right),
 \end{align*}
where the equality holds in virtue of assumption \eqref{assump:e}.
Since $f$ is lsc and $( 1 - \frac{t}{\lVert x_{k} \rVert}) y + \frac{t}{\lVert x_{k} \rVert} x_{k} \rightarrow y + tu$ as $k \rightarrow + \infty$, taking the $\liminf_{k}$ in the previous relation and using the upper semicontinuity of $e$, we deduce 
 \begin{align*}
  & f(y+tu) \leq f(y) + t e(u, 0), ~ \forall ~ t>0, ~ \forall ~ y \in {\rm dom}\,f \\
  & \Longleftrightarrow \, \sup_{y \in {\rm dom}\,f} \sup_{t>0} \frac{f(y+tu)-f(y)}{t} \leq e(u, 0) \\
  & \Longleftrightarrow \, f^{\infty}_{q} (u) \leq e(u, 0),
 \end{align*} 
 a contradiction (because $u \neq 0)$. Therefore, the sequence $\{x_{k}\}_{k}$ is bounded. Hence, there exists a subsequence $\{x^{1}_{k}\}_{k}$ 
such that $x^{1}_{k}\rightarrow \overline{x} \in {\rm dom}\,f$, and since $f$ is lsc, 
 $$f(\overline{x}) \leq \liminf_{k \rightarrow + \infty} f(x^{1}_{k}) = \inf_{\mathbb{R}^{n}}\,f,$$
 Therefore, $\overline{x} \in {\rm argmin}_{\mathbb{R}^{n}}\,f$.

(Compactness): Suppose for the sake of contradiction that ${\rm argmin}_{\mathbb{R}^{n}}\,f$ is unbounded. Then there 
exists a sequence $\{x_{k}\}_{k} \subseteq {\rm argmin}_{\mathbb{R}^{n}}\,f$ with $\lVert x_{k} \rVert \rightarrow + \infty$ 
such that $\frac{x_k}{\lVert x_{k} \rVert} \rightarrow u \in ({\rm dom}\,f)^{\infty}$ for some $u \neq 0$. 

Take $y \in {\rm dom}\,f$ and $t>0$. Since $f$ is $e$-quasiconvex and $\{x_{k}\}_{k} \subseteq {\rm argmin}_{\mathbb{R}^{n}}\,f$, we have
$$f\left( \left(1-\frac{t}{\lVert x_{k} \rVert}\right)y + \frac{t}{\lVert x_{k} \rVert} x_{k} \right) \leq f(y) + \frac{t}{\lVert x_{k} \rVert} \left( 1 - \frac{t}{\lVert x_{k} \rVert} \right) e(x_{k}, y).$$
Since $f$ is lsc and $e$ is usc, using \eqref{assump:e} in the above inequality we obtain
$$f(y+tu) \leq f(y) + t e(u, 0) \, \Longleftrightarrow \, f^{\infty}_{q} (u) \leq e(u, 0),$$
a contradiction to \eqref{suff:cond}. Therefore, ${\rm argmin}_{\mathbb{R}^{n}}\,f$ is bounded. 

Finally, since $f$ is lsc, we obtain that ${\rm argmin}_{\mathbb{R}^{n}}\,f$ is closed, thus compact, which completes the proof.
\end{proof}

The following example is an instance in which Theorem \ref{exist:result1} applies 
and no other result (as far as we know) does.

\begin{example}\label{exam:exist}
 Let us consider the function $f: \mathbb{R} \rightarrow \mathbb{R}$ given by 
 $$f(x) = \min \left \{ g(x), 6 \right \},$$ where $g(x) = \max \left \{  \lvert \lvert x \rvert - 1 \rvert, x^{2} - 3 \right \}$ (see Figure \ref{fig:exist}). 
\begin{figure}[htbp]
\centering
\includegraphics[scale=0.55]{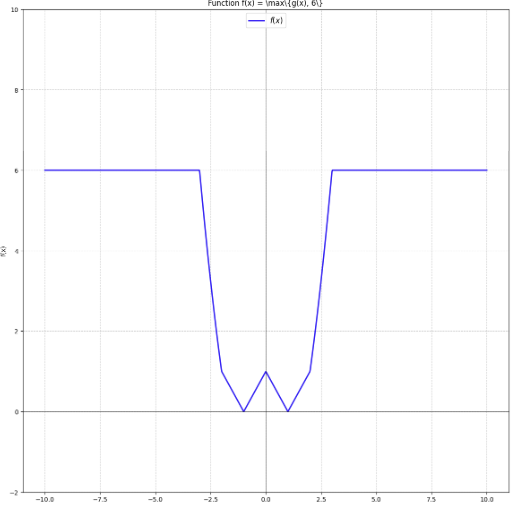}
\caption{Function $f$ in Example \ref{exam:exist}.}
\label{fig:exist}
\end{figure}

Clearly, ${\rm argmin}_{\mathbb{R}}\,f=\{-1, 1\}$ is nonempty and compact, but it is not convex. Furthermore, $f$ is continuous, noncoercive and it is not quasiconvex, i.e., the existence results from \cite{Aus2,AT,ABM,FFB-Vera,HLM,HLL,Iusem-Lara-5} can not be applied.

We claim that $f$ is $e$-quasiconvex with $e(x, y) = 2 \lvert x - y \rvert$. Indeed, we know by \cite[Example 5]{Ali 2021} that $g_{1}(x) = \lvert \lvert x \rvert - 1 \rvert$ is $e$-convex with $e(x, y) = 2 \lvert x - y \rvert$ while $g_{2}(x) = x^{2} - 3$ is convex, thus $e$-convex for every error bifunction $e$, in par\-ti\-cu\-lar, for $e(x, y) = 2 \lvert x - y \rvert$. Then, by \cite[Remark 7]{Ali 2018}, $g$ is also $e$-convex with $e(x, y) = 2 \lvert x - y \rvert$. 

Now, we show that $f$ is $e$-convex with $e(x, y) = 2 \lvert x - y \rvert$. According to the definition of $f$, we have for every $x \in \mathbb{R}$ that $f(x) \leq 6$. Then, we consider two cases:
\begin{itemize}
 \item[$(i)$] Suppose that $\lvert x \rvert$ or $\lvert y \rvert$ is greater than $3$. Then, $\max \left\{f(x), f(y) \right\}  =6$. Therefore,
 $$f(tx + (1 - t) y) \leq 6 \leq \max \left \{  f(x), f(y)  \right \} + 2 \lvert x - y \rvert.$$

\item[$(ii)$] Suppose that $\lvert x \rvert \leq 3$ and $\lvert y \rvert \leq 3$. In this case, $f(x) = g(x)$, $f(y) = g(y)$. In addition, for every $t \in \, ]0,1[$ we have $\lvert tx + (1 - t) y \rvert \leq 3$, and thus $f (tx + (1-t) y) = g((tx + (1 - t)y))$. Hence
\begin{align*}
 f(tx + (1-t ) y) & = g (tx + (1 - t) y)  \\
& \leq t g (x) + (1-t) g(y) + 2 \lvert x - y \rvert \\
& =t f(x) + (1 - t)  f(y) +2 \lvert x -  y \rvert.
\end{align*}
\end{itemize}
Therefore, $f$ is $e$-quasiconvex with $e(x, y)= 2\lvert x - y \rvert$.

Now, let us compute $f^{\infty}_{q}(u)$. Observe that for $u=1$, we have
\begin{align*}
 f^{\infty}_{q} (1) & = \sup_{x \in \mathbb{R}} \sup_{t>0} \frac{f(x+t) - f(x)}{t} 
 \geq \lim_{t \downarrow 0} \frac{f(2.9 + t) - f(2.9)}{t}\\
 & = \lim_{t \downarrow 0} \frac{( (2.9)^{2} + 2 \cdot 2.9 \cdot t + t^{2} - 3)  
 - ((2.9)^{2}-3)}{t} \\
 & = \lim_{t \downarrow 0} (5.8 + t) \\
 & = 5.8.
\end{align*}
Analogously, for $u=-1$, we have $f^{\infty}_{q} (-1) \geq 5.8$.
 
Finally, since $e(u, 0) = 2 \lvert u \rvert = 2$, $e$ is usc and satisfies assumption \eqref{assump:e}, 
$$f^{\infty}_{q} (u) - e(u, 0) \geq 5.8 - 2 = 3.8 > 0, ~ \forall ~ u \neq 0,$$
i.e., it follows from Theorem \ref{exist:result1} that ${\rm argmin}_{\mathbb{R}}\,f$ 
is nonempty and compact.
\end{example}

\begin{remark}
 The necessary implication in Theorem \ref{exist:result1} is still open, but when $e=0$ (that is, $f$ is quasiconvex), Theorem \ref{exist:result1} reduces to the sufficient condition provided in \cite[Theorem 4.7]{FFB-Vera}, which is also necessary in this case. 
\end{remark}

%

Theorem \ref{exist:result1} can be adapted to the constraint case, as we note next. To that end, let us define the set:
\begin{equation}\label{set:Kq}
 K_{q} (f, e) := \{u \in \mathbb{R}^{n}: ~ f^{\infty}_{q} (u) \leq e(u, 0)\}.
\end{equation}

\begin{proposition}\label{suff:constraint} 
 Suppose $\Omega \subseteq \mathbb{R}^{n}$ is a closed and convex set, $e$ is an error bifunction such that is usc and satisfies assumption \eqref{assump:e}, and $f$ is a lsc and $e$-quasiconvex function with $\Omega \subseteq {\rm dom}\,f$. If
 \begin{equation}\label{useful:suff}
  \Omega^{\infty} \cap K_{q} (f, e) = \{0\},
 \end{equation}
 then ${\rm argmin}_{\Omega}\,f$ is nonempty and compact.
\end{proposition}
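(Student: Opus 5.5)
The plan is to run the minimizing-sequence argument of Theorem \ref{exist:result1} on $\Omega$, and then check whether the blow-up inequality it produces can be upgraded to the stated hypothesis, which uses the unrestricted $K_{q}(f,e)$. I expect this upgrade to be the main obstacle. In fact it fails, and the conclusion below is a counterexample: as worded, Proposition \ref{suff:constraint} does not hold.

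\emph{Step 1: the blow-up inequality.} Set $m:=\inf_{\Omega} f$ and take $\{x_{k}\}\subseteq\Omega$ with $f(x_{k})\to m$. Suppose $\lVert x_{k}\rVert\to+\infty$ and $x_{k}/\lVert x_{k}\rVert\to u$. Since $\Omega$ is closed and convex, \eqref{eq:convexcone} gives $u\in\Omega^{\infty}$ with $\lVert u\rVert=1$. Exactly as in the proof of Theorem \ref{exist:result1}, using \eqref{e:qcx}, \eqref{assump:e}, lower semicontinuity of $f$ and upper semicontinuity of $e$, one obtains for every $y\in{\rm dom}\,f$ and $t>0$:
$$f(y+tu)\le \max\{m,f(y)\}+t\,e(u,0).$$

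\emph{Step 2: where the argument breaks.} Dividing by $t$ gives $\frac{f(y+tu)-f(y)}{t}\le e(u,0)$ only when $f(y)\ge m$. This covers every $y\in\Omega$, so Step 1 yields $u\in K_{q}(f+\delta_{\Omega},e)$, where $\delta_{\Omega}$ is the indicator of $\Omega$. It gives no control over points $y\in{\rm dom}\,f\setminus\Omega$ with $f(y)<m$, or with $f(y)\ge m$ but large increments. Since $K_{q}(f,e)\subseteq K_{q}(f+\delta_{\Omega},e)$, the stated condition \eqref{useful:suff} is weaker than the condition the argument needs. The gap is therefore real, not an artefact of the method.

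\emph{Step 3: the counterexample.} Take $n=1$, $e\equiv 0$ (nonnegative, symmetric, usc, and satisfying \eqref{assump:e}), and $\Omega=[0,+\infty[$. Let
$$f(x)=x^{2}\ \text{ if } x\le 0,\qquad f(x)=-x\ \text{ if } x\ge 0.$$
Then $f$ is continuous. It is quasiconvex, hence $e$-quasiconvex, because $S_{\lambda}(f)=[0,+\infty[$ for $\lambda=0$, $S_{\lambda}(f)=[-\sqrt{\lambda},+\infty[$ for $\lambda>0$, and $S_{\lambda}(f)=[-\lambda,+\infty[$ for $\lambda<0$, all convex. We have $\Omega^{\infty}=[0,+\infty[$. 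Taking $y=-1$ and $t=1$ gives $f^{\infty}_{q}(1)\ge f(0)-f(-1)=1>0=e(1,0)$. By positive homogeneity of $f^{\infty}_{q}$ (substitute $t\mapsto t/\lambda$), $f^{\infty}_{q}(u)\ge u>0$ for every $u>0$. Hence $\Omega^{\infty}\cap K_{q}(f,e)=\{0\}$, and \eqref{useful:suff} holds. Yet $\inf_{\Omega}f=-\infty$, so ${\rm argmin}_{\Omega}\,f=\emptyset$. So the proposition cannot be proved as stated.

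\emph{What can be proved.} Steps 1–2 give a complete proof under the corrected hypothesis $\Omega^{\infty}\cap K_{q}(f+\delta_{\Omega},e)=\{0\}$. Equivalently, apply Theorem \ref{exist:result1} to $f+\delta_{\Omega}$, which is lsc and $e$-quasiconvex because $\Omega$ is closed and convex. For $u\notin\Omega^{\infty}$ one has $(f+\delta_{\Omega})^{\infty}_{q}(u)=+\infty$, since some $y+tu$ leaves $\Omega$.

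With that hypothesis, Step 1 contradicts $u\ne 0$, so the minimizing sequence is bounded. A cluster point $\overline{x}$ lies in $\Omega$ by closedness, and $f(\overline{x})\le m$ by lower semicontinuity, so $m$ is finite and attained. Boundedness of ${\rm argmin}_{\Omega}f$ follows by the same blow-up applied to a sequence of minimizers, and closedness follows from lower semicontinuity of $f$ and closedness of $\Omega$.
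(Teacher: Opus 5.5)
Your diagnosis is right, and it is exactly where the paper's own proof breaks. The paper applies Theorem \ref{exist:result1} to $f+\iota_{\Omega}$ and asserts $(f+\iota_{\Omega})^{\infty}_{q}=f^{\infty}_{q}+\iota_{\Omega^{\infty}}$. However, the supremum on the left runs only over base points $x\in\Omega$, so on $\Omega^{\infty}$ one only gets $(f+\iota_{\Omega})^{\infty}_{q}\le f^{\infty}_{q}$. That reduction therefore proves precisely your corrected statement with $\Omega^{\infty}\cap K_{q}(f+\iota_{\Omega},e)=\{0\}$. Your Steps 1--2 and final paragraph are a correct proof of it. (Minor point: the inclusion $K_{q}(f,e)\subseteq K_{q}(f+\delta_{\Omega},e)$ is only valid after intersecting with $\Omega^{\infty}$.)

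The genuine gap is your counterexample, which does not satisfy \eqref{useful:suff}. First, $f(0)-f(-1)=0-1=-1$, not $1$. Your $f$ is strictly decreasing, so every quotient $\frac{f(y+tu)-f(y)}{t}$ with $u>0$ is negative. This gives $f^{\infty}_{q}(u)\le 0=e(u,0)$, hence $\Omega^{\infty}\cap K_{q}(f,e)=[0,+\infty[$.

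Moreover, no example with $\inf_{\Omega}f=-\infty$ can work. When $m=-\infty$, your own Step 1 gives $\max\{m,f(y)\}=f(y)$ for every $y\in{\rm dom}\,f$. So the blow-up direction $u\neq 0$ lies in $\Omega^{\infty}\cap K_{q}(f,e)$, and a bounded minimizing sequence is ruled out by lower semicontinuity and properness. For the same reason, your remark about points with $f(y)\ge m$ ``but large increments'' is wrong: Step 1 controls all of them. The only leak comes from points $y\notin\Omega$ with $f(y)<m$, so a counterexample needs $m$ finite.

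A valid one: take $n=1$, $e\equiv 0$, $\Omega=[0,+\infty[$, and $f(x)=\max\{-1,\min\{x,0\}\}$.
\begin{itemize}
\item This $f$ is continuous and nondecreasing, hence quasiconvex.
\item Taking $y=-1$ and $t=1/u$ gives $f^{\infty}_{q}(u)\ge u>0$ for all $u>0$, so \eqref{useful:suff} holds.
\item Yet ${\rm argmin}_{\Omega}\,f=[0,+\infty[$ is unbounded.
\end{itemize}
It also shows the paper's identity fails: $(f+\iota_{\Omega})^{\infty}_{q}(1)=0<1\le f^{\infty}_{q}(1)$. With this replacement, your conclusion that the proposition is false as stated goes through.
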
 

\begin{proof} 
 By Theorem \ref{exist:result1}, it suffices to show that condition \eqref{suff:cond} is equivalent to 
\begin{equation}\label{eq:const} 
 (f + \iota_{\Omega})^{\infty}_{q} (u) - e(u, 0) > 0, ~ \forall ~ u \in \Omega^{\infty} \backslash \{0\},
\end{equation} 
where $\iota_\Omega$ is the indicator function of $\Omega$. Indeed, by definition of $q$-asymptotic function and the convexity of $\Omega$, for all $d \in \mathbb{R}^n$, we have
\begin{align*}
 (f + \iota_{\Omega} )_q^\infty (d) & = \sup_{x \in {\rm dom}\,(f + \iota_{\Omega})} \sup_{t>0} \frac{(f + \iota_{\Omega}) (x + td) - (f + \iota_{\Omega}) (x)}{t} \\
 & = \sup_{x \in \Omega} \sup_{t>0} \frac{f(x+td) + \iota_{\Omega} (x+td) - f(x)}{t} \\
 & =
 \begin{cases}
  f_q^\infty (d) &\text{if} \ \ d \in \Omega, \\
  +\infty &\text{otherwise},
 \end{cases}
 \\
 & = f_q^\infty (d) + \iota_{\Omega^\infty}(d).
\end{align*}
Hence, $\Omega^{\infty} \cap K_{q} (f, e) = \{0\}$ if and only if $K_q
(f + \iota_{\Omega}, e) = \{0\}$.
\end{proof}

\section{Sufficient KKT Optimality Conditions}\label{sec:05}

Generalized convexity is important for pro\-vi\-ding sufficiency of the KKT optimality
conditions (see \cite{AE,Manga}) in mathematical programming problems. In this 
section, we show that KKT conditions could be sufficient for optimality under an 
$e$-quasiconvexity assumption on the constraints functions.

Let $K \subseteq \mathbb{R}^{n}$ be an open convex set, $f: K \rightarrow 
\mathbb{R}$ be a real-valued function, $g_{i}: K \rightarrow \mathbb{R}$ be 
real-valued functions for every $i \in I := \{1,\ldots,m\}$ and $h_{j}: K 
\rightarrow \mathbb{R}$ be real-valued functions for every $i \in J := \{1,\ldots,k\}$. 

We are interested in the nonconvex mathematical programming problem:
\begin{equation}\label{extended:problem}
 \min_{x \in C^{\prime}} f(x), \tag{P}
\end{equation}
where $C^{\prime} := \{x \in K: ~ g_{i} (x) \leq 0, ~ \forall ~ i \in I, h_{j} 
(x) = 0, ~ \forall ~ j \in J\}$.

As usual, the set of active index at a point $\overline{x} \in C^{\prime}$ is denoted by
\begin{equation}\label{active: set}
 I(\overline{x}) := \{i \in I: ~ g_{i} (\overline{x}) =0 \}.
\end{equation}

Let  $p, q \in \mathbb{N}$ and $S  \subseteq \mathbb{R}^{p}$ be a nonempty set. Then, for a given function $f: S \rightarrow \mathbb{R}^{q}$, $\overline{x} \in S$ and $w \in \mathbb{R}^{p}$, if the limit
\begin{equation}\label{dirdev:vector}
f^{\prime} (\overline{x})(w) := \lim_{t \rightarrow 0^{+}} \frac{f(\overline{x} + tw)  -f(\overline{x})}{t}
\end{equation}
exists, then $f^{\prime} (\overline{x}) (w)$ is the directional derivative of $f$ at $\overline{x}$ in the direction $w$. 

If this limit \eqref{dirdev:vector} exists and finite for all $w \in \mathbb{R} ^{p}$, then $f$ is called directionally differentiable at $\overline{x}$. Furthermore, we also introduce the following notion:

\begin{definition}\label{strong:functional}
 Let $S \subseteq \mathbb{R}^{n}$ be nonempty, $\widehat{C} \subseteq \mathbb{R}^{m}$ be nonempty, $e := (e_{1}, \ldots, e_{m})$ with $e_{i}: \mathbb{R}^{n} \times \mathbb{R}^{n} \rightarrow \mathbb{R}$ an error bifunction for every $i=1, \ldots, m$, and $\varphi: S \rightarrow \mathbb{R}^{m}$ be a mapping with directional derivative at some $\overline{x} \in S$ for every direction $x - \overline{x}$ with $x \in S$. Then the vector mapping $\varphi$ is called $\widehat{C}$-$e$-qua\-si\-con\-vex at $\overline{x}$, if for all $x \in S$, we have
 \begin{equation}\label{eq:strongfunctional}
  \varphi(x) - \varphi(\overline{x}) \in \widehat{C} ~ \Longrightarrow ~
  \varphi^{\prime} (\overline{x}) (x - \overline{x}) - e(x, \overline{x}) \in
  \widehat{C}.
 \end{equation}
\end{definition}

Our next result extends \cite[Theorem 5.14]{J-2020}.

\begin{theorem}\label{main:theo2}
 Let $K$, $f$, $g_{i}$, $h_{j}$ and $C^{\prime}$ be defined as above and $e := (e_{1}, \ldots, e_{m})$ with $e_{i}: \mathbb{R}^{n} \times \mathbb{R}^{n} \rightarrow \mathbb{R}$ an error bifunction for every $i=1, \ldots, m$. Suppose that $f$, $g := (g_{1}, \ldots, g_{m})$ and $h := (h_{1}, \ldots, h_{k})$ have a directional derivative at $\overline{x} \in C^{\prime}$ in every direction in every direction  $x$ and  all the functions are $\text{Gâteaux differentiable}$.
 Moreover, assume that there exists $(\overline{u}, \overline{v}) \in \mathbb{R}^{m}_{+} \times \mathbb{R}^{k}$, $(\overline{u}, \overline{v}) \neq (0, 0)$, with \begin{align}
  & \left\langle  f' (\overline{x}) + \sum^{m}_{i=1} \overline{u}_{i} g'_{i} (\overline{x}) + \sum^{k}_{j=1} \overline{v}_{j}  h'_{j} (\overline{x}), x - \overline{x} \right\rangle \geq \sum^{m}_{i=1} \overline{u}_{i} e_{i} (x, \overline{x}), ~ \forall \, x \in K
  \label{for:contra} \\
  & \hspace{4.9cm} \sum^{m}_{i=1} \overline{u}_{i} g_{i} (\overline{x}) = 0. \label{cond}
 \end{align}
 Then $\overline{x}$ is a global minimum for $f$ on
 \begin{equation}
 \widetilde{C} : =\{x \in K: \, g(x) \in - \mathbb{R}^{m}_{+} + \mathcal{L}(g(\overline{x})), \, h(x) = 0_{k} \},
\end{equation} 
 if and only if the map $(f, g, h): K \rightarrow \mathbb{R} \times \mathbb{R}^{m} 
 \times \mathbb{R}^{k}$, is $\widehat{C}$-$\widehat{e}$-quasiconvex at 
 $\overline{x}$ (where $\widehat{e} := (0, e, 0_{k})$) with 
 \begin{equation}\label{C:hat}
  \widehat{C} := (\mathbb{R}_{-} \backslash \{0\}) \times (- \mathbb{R}^{m}_{+} +\mathcal{L}(g(\overline{x})) ) \times \{0_{k}\}.
 \end{equation}
\end{theorem}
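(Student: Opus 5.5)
We prove the two implications separately, working directly with Definition \ref{strong:functional} for the map $\varphi := (f,g,h)$, the error vector $\widehat{e} = (0,e,0_k)$ and the cone $\widehat{C}$ in \eqref{C:hat}. The key observation is that for $x \in K$,
$$\varphi(x)-\varphi(\overline{x}) \in \widehat{C} \iff f(x) < f(\overline{x}), \ g(x) - g(\overline{x}) \in -\mathbb{R}^m_+ + \mathcal{L}(g(\overline{x})), \ h(x) = 0_k.$$
Here we use $h(\overline{x})=0_k$, since $\overline{x}\in C'$. Moreover, because $g(\overline{x}) \in \mathcal{L}(g(\overline{x}))$, the middle condition is equivalent to $g(x) \in -\mathbb{R}^m_+ + \mathcal{L}(g(\overline{x}))$. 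Hence the premise of \eqref{eq:strongfunctional} says exactly that $x \in \widetilde{C}$ and $f(x)<f(\overline{x})$.

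\emph{($\Rightarrow$).} If $\overline{x}$ is a global minimizer of $f$ on $\widetilde{C}$, then no $x \in K$ satisfies the premise $\varphi(x)-\varphi(\overline{x}) \in \widehat{C}$. The implication \eqref{eq:strongfunctional} therefore holds vacuously, and $(f,g,h)$ is $\widehat{C}$-$\widehat{e}$-quasiconvex at $\overline{x}$. Note also that $\overline{x} \in \widetilde{C}$ because $g(\overline{x}) \in \mathcal{L}(g(\overline{x}))$.

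\emph{($\Leftarrow$).} Argue by contradiction. Suppose some $x \in \widetilde{C}$ has $f(x) < f(\overline{x})$. By the observation above, $\varphi(x)-\varphi(\overline{x}) \in \widehat{C}$, so quasiconvexity at $\overline{x}$ gives three facts:
\begin{itemize}
\item[(i)] $f'(\overline{x})(x-\overline{x}) < 0$;
\item[(ii)] $g'(\overline{x})(x-\overline{x}) - e(x,\overline{x}) = -w + \lambda g(\overline{x})$ for some $w \in \mathbb{R}^m_+$ and $\lambda \in \mathbb{R}$;
\item[(iii)] $h'(\overline{x})(x-\overline{x}) = 0_k$.
\end{itemize}
Since the functions are G\^ateaux differentiable, the directional derivatives are the linear pairings that appear in \eqref{for:contra}. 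Pair (ii) with $\overline{u} \geq 0$ and use the complementarity condition \eqref{cond}:
$$\sum_i \overline{u}_i \bigl(g_i'(\overline{x})(x-\overline{x}) - e_i(x,\overline{x})\bigr) = -\langle \overline{u}, w\rangle + \lambda \langle \overline{u}, g(\overline{x})\rangle = -\langle \overline{u}, w\rangle \leq 0.$$
Fact (iii) kills the $\overline{v}$-terms. Adding (i) then gives
$$\Bigl\langle f'(\overline{x}) + \sum_i \overline{u}_i g_i'(\overline{x}) + \sum_j \overline{v}_j h_j'(\overline{x}),\, x-\overline{x}\Bigr\rangle - \sum_i \overline{u}_i e_i(x,\overline{x}) < 0,$$
which contradicts \eqref{for:contra}. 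Hence $\overline{x}$ is a global minimizer on $\widetilde{C}$.

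The main point needing care is the bookkeeping in (ii): the $\mathcal{L}(g(\overline{x}))$ component must be annihilated by \eqref{cond}. Without that, the sign of the combined term would not be controlled, and this is precisely why the line $\mathcal{L}(g(\overline{x}))$ is built into both $\widetilde{C}$ and $\widehat{C}$. The rest is a direct translation of the definitions. The hypothesis $(\overline{u},\overline{v}) \neq (0,0)$ is not actually needed in the argument.
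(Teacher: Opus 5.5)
Your proof is correct and is essentially the paper's argument. The paper first proves, from \eqref{for:contra} and \eqref{cond}, that the linearized triple $\bigl(f'(\overline{x})(x-\overline{x}),\, g'(\overline{x})(x-\overline{x})-e(x,\overline{x}),\, h'(\overline{x})(x-\overline{x})\bigr)$ never lies in $\widehat{C}$ for $x\in K$, and then uses this contrapositively for ($\Leftarrow$); you inline the same computation into a contradiction argument, ($\Rightarrow$) is vacuous in both, and your use of an arbitrary $\lambda\in\mathbb{R}$ for the $\mathcal{L}(g(\overline{x}))$ component is, if anything, cleaner than the paper's ``$\alpha \geq 0$''.
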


\begin{proof}
 First, we show that:
 \begin{equation}\label{claim:01}
 \left( f^{\prime} (\overline{x}) (x - \overline{x}), g^{\prime} (\overline{x})
 (x - \overline{x}) - e (x, \overline{x}), h^{\prime} (\overline{x}) (x - 
 \overline{x}) \right) \not\in \widehat{C}, ~ \forall ~ x \in K.
 \end{equation}
 
 Indeed, suppose on the contrary that there exists $x_{0} \in K$ such that
 $$\left( f^{\prime} (\overline{x}) (x_{0} - \overline{x}), g^{\prime} (\overline{x}) (x_{0} - \overline{x}) - e (x_{0}, \overline{x}), h^{\prime} (\overline{x}) (x_{0} - \overline{x}) \right) \in \widehat{C},$$
 that is,
 \begin{eqnarray}
  f^{\prime} (\overline{x}) (x_{0} - \overline{x}) & < & 0, \notag \\
  g^{\prime} (\overline{x}) (x_{0} - \overline{x}) - e (x_{0}, \overline{x}) 
  & \in & - \mathbb{R}^{m}_{+} + \mathcal{L}(g(\overline{x})) \label{MF:01} \\
  h^{\prime} (\overline{x}) (x_{0} - \overline{x}) & = & 0_{k}. \notag
 \end{eqnarray} 
 Then we have for some $\alpha, \beta \geq 0$ that 
 \begin{align*}
  f^{\prime} (\overline{x}) (x_{0} - \overline{x})  & + \sum^{m}_{i=1} 
  \left( \overline{u}_{i} g^{\prime}_{i} (\overline{x}) (x_{0} - \overline{x}) 
  - \overline{u}_{i} e_{i} (x_{0}, \overline{x}) \right) + \sum^{k}_{i=1}
  \overline{v}_{i} h^{\prime}_{i} (\overline{x}) (x_{0} - \overline{x}) \notag \\
  & < 0 + \sum^{m}_{i=1} \overline{u}_{i} \left(  g^{\prime}_{i} (\overline{x}) (x_{0} - \overline{x}) - e_{i} (x_{0}, \overline{x}) \right) + 0 \notag \\
  & \leq \sum^{m}_{i=1} \overline{u}_{i} \left( - \lvert y_{i} \rvert + \alpha g_{i}(\overline{x}) \right)
  ~~  (by ~ \eqref{MF:01} ~ with ~ \lvert y_{i} \rvert \in \mathbb{R}_{+})  \\
  & = - \sum^{m}_{i=1} \overline{u}_{i} \lvert y_{i} \rvert \leq 0, \notag
 \end{align*} 
 
 which contradicts \eqref{for:contra}. Therefore, relation \eqref{claim:01} holds.

 $(\Leftarrow)$: If the map $(f, g, h)$ is $\widehat{C}$-$\widehat{e}$-quasiconvex at $\overline{x}$ (with $\widehat{e} = (0, e, 0_{k})$), then it follows from \eqref{claim:01} that $(f(x) - f(\overline{x}), g(x) - g(\overline{x}), h(x) - h(\overline{x})) \not\in \widehat{C}$, that is, there is no $x \in K$ such that
\begin{eqnarray}
 f(x) & < & f(\overline{x}) \notag \\
 g(x) & \in & \{g(\overline{x})\} - \mathbb{R}^{m}_{+} + \mathcal{L}(g(\overline{x})) \subseteq - \mathbb{R}^{m}_{+} + \mathcal{L}(g(\overline{x})) \notag \\
 h(x) & = & 0_{k}. \notag
\end{eqnarray}

Since $\{g(\overline{x})\} \in - \mathbb{R}^{m}_{+} \subseteq -
\mathbb{R}^{m}_{+} + \mathcal{L}(g(\overline{x}))$, $\overline{x} \in \widetilde{C}$, hence $\overline{x}$ is 
a global minimum for $f$ on $\widetilde{C}$.

$(\Rightarrow)$ Suppose that $\overline{x}$ is a global minimum for $f$ on $\widetilde{C}$, then there is no $x \in \widetilde{C}$ such that
\begin{eqnarray}
 f(x) & < & f(\overline{x}) \notag \\
 g(x) & \in & - \mathbb{R}^{m}_{+} + \mathcal{L}(g(\overline{x})) \, = \{g(\overline{x})\} - \mathbb{R}^{m}_{+} + \mathcal{L}(g(\overline{x})) \notag \\
 h(x) & = & 0_{k}, \notag
\end{eqnarray}
that is,
$$(f(x) - f(\overline{x}), g(x) - g(\overline{x}), h(x) - h(\overline{x})) \not\in 
\widehat{C}, ~ \forall ~ x \in K.$$

Therefore, by this and relation \eqref{claim:01}, we conclude that the map 
$(f, g, h)$ is $\widehat{C}$-$\widehat{e}$-quasiconvex at $\overline{x}$ with 
$e = (0, e, 0_{k})$, which completes the proof.
\end{proof}

A sufficient condition for global minimizers in problem
 \eqref{extended:problem} is given below.

\begin{corollary}\label{jahn}
 Let $K$, $f$, $g_{i}$, $h_{j}$, $C^{\prime}$ be defined as above and $e := (e_{1}, \ldots, e_{m})$ with $e_{i}: \mathbb{R}^{n} \times \mathbb{R}^{n} \rightarrow \mathbb{R}$ an error bifunction for every $i=1, \ldots, m$. Suppose that $f$, $g := (g_{1}, \ldots, g_{m})$ and $h := (h_{1}, \ldots, h_{k})$ have a directional derivative at $\overline{x} \in C^{\prime}$ in every direction $x - \overline{x}$ with arbitrary $x \in K$. Moreover, assume that there exists $(\overline{u}, \overline{v}) \in \mathbb{R}^{m}_{+} \times \mathbb{R}^{k}$, $(\overline{u}, \overline{v}) \neq (0, 0)$ with 
 \begin{align*}
  & \left\langle  f'(\overline{x}) + \sum^{m}_{i=1} \overline{u}_{i} g'_{i} (\overline{x}) + \sum^{k}_{j=1} \overline{v}_{j} h'_{j} (\overline{x}), x - \overline{x} \right\rangle \geq \sum^{m}_{i=1} 
  \overline{u}_{i} e_{i} (x, \overline{x}), ~ \forall ~ x \in K, \\ 
  & \hspace{4.9cm} \sum^{m}_{i=1} \overline{u}_{i} g_{i} (\overline{x}) = 0, 
 \end{align*}
 and that the map $(f, g, h): K \rightarrow \mathbb{R} \times \mathbb{R}^{m} \times \mathbb{R}^{k}$, is $\widehat{C}$-$\widehat{e}$-quasiconvex at $\overline{x}$ with $\widehat{e} := (0, e, 0_{k})$ and $\widehat{C}$ as in \eqref{C:hat},
 then $\overline{x}$ is a minimal point for $f$ on $C^{\prime}$.
\end{corollary}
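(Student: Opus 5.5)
The plan is to deduce Corollary \ref{jahn} from the sufficiency direction $(\Leftarrow)$ of Theorem \ref{main:theo2}. Then it only remains to check that the feasible set $C^{\prime}$ is contained in the enlarged set $\widetilde{C}$. The hypotheses of the corollary are exactly those used in the $(\Leftarrow)$ part of Theorem \ref{main:theo2}. These are the multiplier inequality \eqref{for:contra}, the complementarity condition \eqref{cond}, and $\widehat{C}$-$\widehat{e}$-quasiconvexity of $(f,g,h)$ at $\overline{x}$ with $\widehat{e}=(0,e,0_k)$. Inspecting that argument, it only uses directional derivatives at $\overline{x}$ along directions $x-\overline{x}$. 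So the extra Gâteaux differentiability assumption is not needed for this direction, and I will simply re-run it under the corollary's weaker hypothesis.

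First, I reproduce claim \eqref{claim:01}: for no $x\in K$ does the linearized triple
\[
\bigl(f'(\overline{x})(x-\overline{x}),\, g'(\overline{x})(x-\overline{x})-e(x,\overline{x}),\, h'(\overline{x})(x-\overline{x})\bigr)
\]
belong to $\widehat{C}$. Suppose it did. Write its middle component as $-w+\alpha g(\overline{x})$ with $w\in\mathbb{R}^m_+$ and $\alpha\in\mathbb{R}$. Pair this with $\overline{u}\geq 0$ and use \eqref{cond} to kill the term $\alpha\sum_i\overline{u}_i g_i(\overline{x})$; note this works for any real $\alpha$, not only $\alpha\ge0$. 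Adding the strictly negative $f$-term and the zero $h$-term then gives a value $<0$, which contradicts \eqref{for:contra}.

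Second, I argue by contraposition of the quasiconvexity implication \eqref{eq:strongfunctional}. Since the linearized triple is never in $\widehat{C}$, the increment $(f(x)-f(\overline{x}),\,g(x)-g(\overline{x}),\,h(x)-h(\overline{x}))$ is never in $\widehat{C}$ either. Now take any $x\in C^{\prime}$. Then $h(x)=0=h(\overline{x})$, and
\[
g(x)-g(\overline{x})\in -\mathbb{R}^m_+ - g(\overline{x})\subseteq -\mathbb{R}^m_+ +\mathcal{L}(g(\overline{x})),
\]
because $-g(\overline{x})\in\mathcal{L}(g(\overline{x}))$. Hence the only component that can fail is the first, so $f(x)-f(\overline{x})\notin\mathbb{R}_-\setminus\{0\}$, that is, $f(x)\geq f(\overline{x})$. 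Since $\overline{x}\in C^{\prime}$, this shows $\overline{x}$ is a minimal point of $f$ on $C^{\prime}$. Equivalently, $C^{\prime}\subseteq\widetilde{C}$ and we may invoke the theorem directly.

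The only delicate point is the sign bookkeeping in the first step: the lineality part $\mathcal{L}(g(\overline{x}))$ must be annihilated by complementarity. The key fact there is that $\overline{u}_i g_i(\overline{x})=0$ for each $i$, because $\overline{u}_i\ge0$ and $g_i(\overline{x})\le0$ make every summand of \eqref{cond} nonpositive. The rest is routine.
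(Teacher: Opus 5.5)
Your argument is correct and follows the route the paper intends: the corollary is the $(\Leftarrow)$ direction of Theorem \ref{main:theo2} combined with $C^{\prime}\subseteq\widetilde{C}$, which holds because $-\mathbb{R}^m_+\subseteq -\mathbb{R}^m_+ +\mathcal{L}(g(\overline{x}))$. Re-running that direction instead of citing it is worthwhile, because it removes the Gâteaux differentiability hypothesis that the corollary does not state. Your treatment of $\mathcal{L}(g(\overline{x}))$ with $\alpha\in\mathbb{R}$ also corrects the paper's restriction $\alpha\ge 0$.
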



In particular, we have the following application  when $h \equiv 0$ in 
problem \eqref{extended:problem}. Hence, in this case, the feasible set becomes 
$C := \{x \in K: ~ g_{i} (x) \leq 0, ~ \forall ~ i \in I\}$.

The following result extends the Theorem in  \cite[pages 151-152]{Manga} from quasiconvex to $e$-quasiconvex constraints.

\begin{theorem}\label{theo:suff}
 Let $K$, $f$, $g_{i}$ and $C$ be defined as above and $e := (e_{1}, \ldots, e_{m})$ with $e_{i}: \mathbb{R}^{n} \times \mathbb{R}^{n} \rightarrow \mathbb{R}$ an error bifunction for every $i=1, \ldots, m$. Suppose in addition that $C$ is convex, $f$ is pseudoconvex, $g := (g_{1}, \ldots, g_{m})$ is $e$-quasiconvex and that $f$ and $g$ are di\-ffe\-ren\-tia\-ble at $\overline{x} \in K$. If there exists $\overline{u} \in \mathbb{R}^{m}$ such that $(\overline{x}, \overline{u}) \in \mathbb{R}^{n} \times \mathbb{R}^{m}$ with
  \begin{eqnarray}
  \left\langle \nabla f(\overline{x}) + \sum^{m}_{i=1} \overline{u}_{i} 
  \nabla g_{i} (\overline{x}), \, x - \overline{x} \right\rangle \nonumber
  & \geq & \sum^{m}_{i=1} \overline{u}_{i} e_{i} (x, \overline{x}), ~ \forall ~ x \in C, \nonumber \\
  \sum^{m}_{i=1} \overline{u}_{i} \, g_{i} (\overline{x}) & = & 0, 
  \nonumber \\ 
  g_{i} (\overline{x}) & \leq & 0, ~ \forall ~ i \in I, \nonumber \\
  \overline{u} & \geq & 0. \label{KKT:point}
\end{eqnarray} 
 Then $\overline{x}$ is a global minimum for problem \eqref{extended:problem} (with 
 $h \equiv 0)$.
\end{theorem}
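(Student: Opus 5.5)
The plan follows the classical Mangasarian argument. The KKT inequality already carries the error terms, so the $e$-quasiconvexity of the constraints will only be used through a first-order implication. Fix $x \in C$; the aim is to show $f(x) \geq f(\overline{x})$. By pseudoconvexity \eqref{pseudoconvex} of $f$, it suffices to prove
\[
\langle \nabla f(\overline{x}), x - \overline{x} \rangle \geq 0.
\]
By the first relation in \eqref{KKT:point}, this in turn follows once we show
\[
\sum_{i=1}^{m} \overline{u}_{i} \bigl( \langle \nabla g_{i}(\overline{x}), x - \overline{x}\rangle - e_{i}(x,\overline{x}) \bigr) \leq 0 .
\]

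We split the indices. If $i \notin I(\overline{x})$, then $g_{i}(\overline{x})<0$, and complementary slackness $\sum \overline{u}_{i} g_{i}(\overline{x}) = 0$ with $\overline{u} \geq 0$ and $g(\overline{x}) \leq 0$ forces $\overline{u}_{i} = 0$. So only active indices contribute. For $i \in I(\overline{x})$ we have $g_{i}(x) \leq 0 = g_{i}(\overline{x})$, since $x \in C$.

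Next we derive the first-order inequality directly from Definition \ref{edf:e-qcx}, to avoid the extra hypotheses of Theorem \ref{Th 3.6. Huang}. Because $C$ is convex, the points $\overline{x} + t(x-\overline{x})$ stay in $K$. Applying \eqref{e:qcx} with the pair $(x,\overline{x})$ and parameter $t\in\,]0,1[$ gives
\[
g_{i}(\overline{x} + t(x-\overline{x})) \leq \max\{g_{i}(x), g_{i}(\overline{x})\} + t(1-t)e_{i}(x,\overline{x}) = g_{i}(\overline{x}) + t(1-t) e_{i}(x,\overline{x}).
\]
Subtract $g_{i}(\overline{x})$, divide by $t$, and let $t\to 0^{+}$. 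Differentiability of $g_{i}$ at $\overline{x}$ then yields
\[
\langle \nabla g_{i}(\overline{x}), x-\overline{x}\rangle \leq e_{i}(x,\overline{x}).
\]
Multiplying by $\overline{u}_{i}\geq 0$ and summing over the active indices gives the required sum inequality, hence $\langle\nabla f(\overline{x}),x-\overline{x}\rangle \geq 0$. Pseudoconvexity then gives $f(x)\geq f(\overline{x})$. Finally, $\overline{x}\in C$ by the feasibility condition in \eqref{KKT:point}, so $\overline{x}$ is a global minimizer on $C$.

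The main point requiring care is the limiting step. First, \eqref{e:qcx} must be applied with the roles arranged so that the coefficient of $e_i$ is $t(1-t)$ with $t$ the weight on $x$; this is harmless because $e_i$ is symmetric. Second, the domain issue: the definition is stated on $\mathbb{R}^n$, while $g_i$ lives on $K$. I would interpret $e$-quasiconvexity of $g_i$ on the convex set $K$, which contains the segment $[\overline{x},x]$ because $x,\overline{x}\in C\subseteq K$ and $C$ is convex. Pseudoconvexity of $f$ is likewise used on $K$.
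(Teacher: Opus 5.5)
Your proof is correct and has the same skeleton as the paper's: complementary slackness kills the inactive multipliers, the bound $\langle \nabla g_i(\overline{x}), x-\overline{x}\rangle \le e_i(x,\overline{x})$ on active constraints combined with the KKT inequality gives $\langle \nabla f(\overline{x}), x-\overline{x}\rangle \ge 0$, and pseudoconvexity finishes. The one difference is how you get that first-order bound. The paper cites Theorem \ref{Th 3.6. Huang}, whose hypotheses (convexity of $e_i(\cdot,y)$, $e_i(x,x)=0$, Fr\'echet differentiability on an open set containing both points) are not assumed in the statement. Your direct limit argument from \eqref{e:qcx} needs only differentiability of $g_i$ at $\overline{x}$ and finiteness of $e_i$, so it fits the stated hypotheses better.
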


\begin{proof}
 Let $I(\overline{x})$ be the active set of $\overline{x} \in K$ and $K(\overline{x}) := \{i \in I: ~ g_{i} (\overline{x}) < 0 \}$. Then $I = I(\overline{x}) \cup K(\overline{x})$. Furthermore, since $\overline{u}_{i} \geq 0$ and
 $g_{i} (\overline{x}) \leq 0$ for all $i \in I$, and $\sum^{m}_{i=1} \overline{u}_{i} g_{i} (\overline{x}) = 0$, then
 $$\overline{u}_{i} g_{i} (\overline{x}) = 0, ~ \forall ~ i \in I ~ \Longrightarrow ~ \overline{u}_{j} = 0, ~ \forall ~ j \in K(\overline{x}).$$

 Since $g_{i} (x) \leq 0 = g_{i} (\overline{x})$ for all $i \in I(\overline{x})$ 
 and every $g_{i}$ is $e_{i}$-quasiconvex, it follows from Theorem  \ref{Th 3.6. Huang} that
 \begin{align}
  & \hspace{1.1cm} \langle \nabla g_{i} (\overline{x}), x - \overline{x} \rangle 
  \leq e_{i} (x, \overline{x}), ~ \forall ~ i \in I(\overline{x}), ~ \forall ~ x \in 
  C \notag \\
  & \Longrightarrow \, \overline{u}_{i} \langle \nabla g_{i} (\overline{x}), x 
  - \overline{x} \rangle \leq \overline{u}_{i} e_{i} (x, \overline{x}), ~ \forall 
  ~ i \in I(\overline{x}), ~ \forall ~ x \in C. \label{eq:01}
 \end{align}
  Since $\overline{u}_{j} = 0$ for all $j \in K(\overline{x})$, we have
 \begin{align}
  \overline{u}_{j} \langle \nabla g_{j} (\overline{x}), x - \overline{x} \rangle 
  = 0, ~ \forall ~ j \in K(\overline{x}), ~ \forall ~ x \in C. \label{eq:02}
 \end{align}
 By adding both \eqref{eq:01} and \eqref{eq:02}, we obtain
  \begin{align}
   \sum^{m}_{i = 1} \overline{u}_{i} \left\langle \nabla g_{i} (\overline{x}), 
   x - \overline{x} \right\rangle \leq \sum^{m}_{i = 1} \overline{u}_{i} e_{i} 
   (x, \overline{x}), ~ \forall ~ x \in C. 
  \label{eq:03}
 \end{align}
 
 Now, it follows by assumption \eqref{KKT:point} that
 \begin{align}
  & \hspace{1.0cm} \left\langle \nabla f(\overline{x}) + \sum^{m}_{i=1} \overline{u}_{i} \nabla g_{i} (\overline{x}), x - \overline{x} \right\rangle \geq \sum^{m}_{i = 1} \overline{u}_{i} e_{i} (x, \overline{x}), \notag \\
  & \Longleftrightarrow \, \left\langle \nabla f(\overline{x}), x - \overline{x}
  \right\rangle \geq - \sum^{m}_{i=1} \overline{u}_{i}  \left\langle 
  \nabla g_{i} (\overline{x}), x - \overline{x} \right\rangle + \sum^{m}_{i = 1}
  \overline{u}_{i} e_{i} (x, \overline{x}), \notag \\
  & \hspace{2.5cm} \overset{\eqref{eq:03}}{\Longrightarrow} ~  \left\langle \nabla f(\overline{x}), x - \overline{x} \right\rangle \geq 0. \label{eq:04}
 \end{align}
 Since $f$ is pseudoconvex, it follows from \eqref{eq:04} that
 $$f(x) \geq f(\overline{x}), ~ \forall ~ x \in C,$$
 i.e., $\overline{x}$ is a global solution of problem \eqref{extended:problem} with 
 $h \equiv 0$.
\end{proof}

In the particular case when every $e_{i} \equiv 0$, we have:

\begin{corollary}\label{coro:suff} {\rm (\cite[Theorem in pages 151-152]{Manga})}
 Let $K$, $f$, $g_{i}$, $\overline{x} \in K$ and $C$ and $I(\overline{x})$ defined as above. Suppose in addition that $C$ is convex, $f$ is pseudoconvex and every $g_{i}$ is quasiconvex (thus every $e_{i} \equiv 0$) for every $i \in I$. If there exist $\overline{u} \in \mathbb{R}^{m}$ such that $(\overline{x}, \overline{u})$ satisfies condition \eqref{KKT:point} (with every $e_{i} \equiv 0$), then $\overline{x}$ is a global solution of problem \eqref{extended:problem} with 
 $h \equiv 0$.
\end{corollary}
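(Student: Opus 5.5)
The natural route is to specialise Theorem \ref{theo:suff} to the case $e_{i} \equiv 0$ for every $i \in I$. The only task is to check that every hypothesis of that theorem then holds, and that condition \eqref{KKT:point} becomes the classical KKT system. Once that is done, the conclusion of Theorem \ref{theo:suff} is exactly the claim that $\overline{x}$ is a global solution of \eqref{extended:problem} with $h \equiv 0$.

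The steps, in order, are as follows.

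First, the zero map $e_{i} \equiv 0$ is an error bifunction: it is nonnegative, symmetric, and its domain is all of $\mathbb{R}^{n}\times\mathbb{R}^{n}$. It also satisfies the requirements of Theorem \ref{Th 3.6. Huang}, since $e_{i}(\cdot,y)$ is convex and $e_{i}(x,x)=0$.

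Second, each quasiconvex $g_{i}$ is $0$-quasiconvex. This follows either from Proposition \ref{properties}(b) or directly, because \eqref{e:qcx} with $e=0$ is the definition of quasiconvexity.

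Third, with $e_{i}\equiv 0$ the first inequality of \eqref{KKT:point} reads $\langle \nabla f(\overline{x}) + \sum_{i}\overline{u}_{i}\nabla g_{i}(\overline{x}), x-\overline{x}\rangle \ge 0$ for all $x\in C$. The remaining conditions are unchanged: complementary slackness, feasibility and $\overline{u}\ge 0$.

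Fourth, the remaining hypotheses are assumed verbatim in the corollary: $C$ convex, $f$ pseudoconvex, and differentiability of $f$ and $g$ at $\overline{x}$. Invoking Theorem \ref{theo:suff} then finishes the proof.

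There is no real obstacle here. The only point deserving care is the use of Theorem \ref{Th 3.6. Huang} inside the proof of Theorem \ref{theo:suff}. In that step one needs $\langle\nabla g_{i}(\overline{x}),x-\overline{x}\rangle\le 0$ whenever $g_{i}(x)\le g_{i}(\overline{x})$, for $i\in I(\overline{x})$. With $e_{i}=0$ this is precisely the Arrow--Enthoven characterisation \eqref{char:AE} of differentiable quasiconvex functions. So, if one prefers not to rely on the differentiability-on-an-open-set hypothesis of Theorem \ref{Th 3.6. Huang}, one can replace that step by \eqref{char:AE} directly, or simply by the elementary one-sided argument: $g_{i}(\overline{x}+t(x-\overline{x}))\le g_{i}(\overline{x})$ for $t\in\,]0,1[$, then divide by $t$ and let $t\to 0^{+}$. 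The rest of the argument is unchanged: summing over active indices and using $\overline{u}_{j}=0$ off $I(\overline{x})$ gives $\langle\nabla f(\overline{x}),x-\overline{x}\rangle\ge 0$ on $C$, and pseudoconvexity \eqref{pseudoconvex} yields $f(x)\ge f(\overline{x})$.
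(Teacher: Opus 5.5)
Your proposal is correct and matches the paper, which obtains this corollary simply as the special case $e_i \equiv 0$ of Theorem~\ref{theo:suff}, with no separate argument. Your side remark is a genuine tightening. Theorem~\ref{theo:suff} assumes differentiability only at $\overline{x}$, whereas Theorem~\ref{Th 3.6. Huang} requires differentiability on an open set containing both points, so your one-sided difference-quotient argument is the better justification for that step; it in fact works verbatim for general $e_i$, giving $\langle\nabla g_i(\overline{x}),x-\overline{x}\rangle\le e_i(x,\overline{x})$.
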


We conclude this paper with the following example.

\begin{example}
 Let $K := \, ]-\pi, \pi[$ and $f,g_{1}: K \rightarrow \mathbb{R}$ be given by $f(x) = - x$ and $g_{1} (x) = \sin(x)$. Then, $C := \left\{x \in K: \, \sin(x) \leq 0 \right\} = \, ]-\pi,0]$.
 We consider the problem
$$ \min_{x\in C} \, -x.$$
 Clearly, ${\rm argmin}_{C}\,f = \{0\}$. Furthermore, note that $f$ is convex (thus pseudoconvex) while $g_{1}$ is not quasiconvex on $K$, but $g_{1}$ is $e$-quasiconvex on $K$ with $e (x,y)= 2 |x-y|$ by \cite[Proposition 18]{Ali 2021}. As a consequence, the sufficient KKT results from \cite[Chapter 5]{J-2020} as well as the ones from \cite[Chapters 10 and 11]{Manga} can not be applied.
 
On the other hand, note that $\nabla f(\overline{x}) = -1$, $\nabla g(\overline{x}) = \cos(0) = 1$. Since $g_{1}$ is $e$-quasiconvex, it follows from \eqref{KKT:point} that
\begin{equation}\label{KKT1ex}
 (-1 + u_{1}) x \geq 2 u_{1}|x|, ~ \forall ~ x \in C = \, ]-\pi, 0], \notag
\end{equation}
then $u_{1} \leq \frac{1}{3}$. Hence, it follows by Theorem \ref{theo:suff} that $\overline{x} = 0$ is a global solution for $f$ on $C$, as expected.
\end{example}

\section{Conclusions}

We studied a new class of generalized quasiconvex functions, termed 
$e$-quasi\-con\-ve\-xi\-ty, which encompasses various important classes of 
functions including convex, Lipschitz continuous, $e$-convex, and quasiconvex functions. 
By ex\-ten\-ding key theoretical properties, we established existence results for the 
constraint and unconstraint minimization problems and finer sufficient KKT optimality
conditions for the nonconvex mathematical programming problem.

We hope these contributions will shed new light on further discussions of generalized convexity, generalized monotonicity, and their applications in optimization and 
variational inequalities.

\section{Declarations}

\subsection{Availability of supporting data}

No data sets were generated during the current study. 

\subsection{Competing Interests}

There are no conflicts of interest or competing interests related to this
manuscript.

\subsection{Funding}

 This research was partially supported by ANID--Chile through Fondecyt Regular 1241040 (Lara).

\subsection{Acknowledgments}

Part of this work was carried out when M.H. Alizadeh was visiting the Instituto de Alta Investigaci\'on (IAI) of the Universidad de Tarapac\'a (UTA), in Arica, Chile, during November-December 2024, this author wishes to thank the institute for its hospitality. This research was partially supported by the Iran National Science Foundation (INSF) under Project Number 40400113 (Alizadeh) and by ANID--Chile under project Fondecyt Regular 1241040 (Lara).


\begin{thebibliography}{99}

\bibitem {Ali 2018} 
 {\sc M.H. Alizadeh}, On generalized convex functions and generalized subdi\-ffe\-rential, {\it Optim. Lett.}, \textbf{14}, 157--169, (2020).

\bibitem{Ali 2021} 
 {\sc M.H. Alizadeh}, On generalized convex functions and generalized sub\-di\-ffe\-rential II, {\it Optim. Lett.}, \textbf{15}, 2225--2240, (2021).

\bibitem {Ali 203e-conv} 
 {\sc M.H. Alizadeh}, On $e$-convex functions and $e$-subdifferentials in locally convex spaces, {\it Optimization}, \textbf{72}, 1347--1362, (2023).

\bibitem {ABP} 
 {\sc M.H. Alizadeh, M. Bianchi, R. Pini}, On $e$-monotonicity and maxi\-ma\-li\-ty of operators in Banach spaces, {\it J. Global Optim.}, \textbf{91}, 155--170, (2025).


\bibitem {Ali-abadi} 
 {\sc M.H. Alizadeh, J. Hosseinabadi}, On $\sigma $-subdifferential polarity and Fr\'echet $\sigma$-subdifferential, {\it Numer. Funct. Anal. Optim.}, \textbf{44,} 603--618, (2023).

\bibitem {ALi-Mako} 
 {\sc M.H. Alizadeh, J. Mako}, On $e$-convexity, {\it Miskolc Math. Notes}, \textbf{23}, 51--60, (2022).

\bibitem {AZ 2023} 
 {\sc M.H. Alizadeh, A. Youhannaee Zanjani}, On the sum rules and maximality of generalized subdifferentials, {\it Optimization},
 \textbf{73}, 2139--2158, (2023).

\bibitem {AZ 2025} 
 {\sc M.H. Alizadeh, A. Youhannaee Zanjani}, On e-convex functions and their applications in optimality conditions, {\it Optimization}, 1-19
 (2025)

\bibitem {Ansari-book} 
 {\sc Q.H. Ansari, C.S. Lalitha, M. Mehta}. ``Generalized Convexity, Non\-smooth Variational Inequalities, and Nonsmooth Optimization". CRC Press, Cornwall, (2013).

\bibitem{AE}
 \textsc{K.J. Arrow, A.C. Enthoven}, Quasiconcave programming, \textit{Econo\-me\-tri\-ca}, \textbf{29}, 779--800, (1961).

\bibitem{ABM}
 {\sc H. Attouch, G. Buttazzo, G. Michaille}. ``Variational Analysis in Sobolev and BV Spaces: Aplications to PDEs and Optimization''. MPS-SIAM, Philadelphia, (2006). 

\bibitem{Aus2}
 {\sc A. Auslender}, How to deal with the unbounded in optimization: theory and algorithms, {\it Math. Program.}, {\bf 79}, 3--18, (1997). 

\bibitem {AT}
 {\sc A. Auslender, M. Teboulle}. ``Asymptotic Cones and Functions in Optimization and Variational Inequalities". Springer-Verlag, New York, Berlin, (2003).

\bibitem {ACJ 94} 
 {\sc D. Aussel, J.-N. Corvellec, M. Lassonde}, Subdifferential characterization of quasiconvexity and convexity, {\it J. Convex Anal.}, \textbf{1}, 195--201, (1994).

\bibitem{ADSZ}
 {\sc M. Avriel, W.E. Diewert, S. Schaible, I. Zang}. ``Generalized Concavity''. SIAM, Philadelphia, (2010).

\bibitem{BBGT}
 {\sc C. Baiocchi, G. Buttazzo, F. Gastaldi, F. Tomarelli}, General existence theorems for unilateral problems in continuum mechanics,  {\it Arch, J. Rational Mech. Anal.}, {\bf 100}, 149--189 (1988). 


\bibitem{CMA} 
 \textsc{A. Cambini, L. Martein}. ``Generalized Convexity and Optimization''. Springer-Verlag, Berlin-Heidelberg, (2009).

\bibitem{D-1959}
 \textsc{G. Debreu}. ``Theory of value". John Wiley, New York, (1959).

\bibitem{FFB-Vera}
 {\sc F. Flores-Baz\'an, F. Flores-Baz\'an, C.  Vera}, Maximizing and minimizing quasiconvex functions: related properties, existence and optimality conditions via radial epiderivates, {\it J. Global Optim.}, {\bf 63}, 99--123, (2015).

\bibitem{flo-had-lara-2}
 {\sc F. Flores-Baz\'an, N. Hadjisavvas, F. Lara, I. Montenegro}, First- and second-order 
asymptotic analysis with applications in quasiconvex optimization, {\it J. Optim. Theory 
Appl.}, {\bf 170}, 372--393, (2016).

\bibitem{HKS}
 {\sc N. Hadjisavvas, S. Komlosi, S. Schaible}. ``Handbook of Gene\-ra\-li\-zed Convexity and Generalized Monotonicity''. Springer-Verlag, Boston, (2005).

\bibitem{HLM}
 {\sc N. Hadjisavvas, F. Lara, J.E. Mart\'inez-Legaz}, A quasiconvex asymptotic function with applications in optimization, {\it J. Optim. Theory Appl.}, {\bf 180}, 170--186, (2019). 

\bibitem{HLL}
{\sc N. Hadjisavvas, F. Lara, D.T. Luc}, A general asymptotic function with applications in nonconvex optimization. {\it J. Global Optim.}, {\bf 78}, 49--68, (2020). 

\bibitem {Huang 24} { \sc H. Huang. H. Fang}, Rho-subdifferential and its application to optimization problems, {\it J. Ind. Manag. Optim.}, {\bf 21}, 4695--4711, (2025).
 
\bibitem {Hui-Sun}
 {\sc H. Huang, C. Sun}, Sigma-subdifferential and its application to minimization problem, {\it Positivity}, \textbf{24}, 539--551, (2020).

\bibitem{Iusem-Lara-5}
{\sc A. Iusem, F. Lara,} Quasiconvex optimization and asymptotic analysis in Banach spaces. {\it Optimization}, {\bf 69}, 2453--2470, (2020). 

\bibitem{J-2020}
 {\sc J. Jahn}. ``Introduction to the Theory of Nonlinear Optimization". Springer Nature, 4th Edition, (2020).

\bibitem{Jo-Luc-MI} 
 {\sc A. Jofr\'e, D.T. Luc, M. Th\'era}, $\epsilon$-subdifferential and $\epsilon$-monotonicity, {\it Nonlinear Anal.}, \textbf{33}, 71--90, (1998).


\bibitem{lara}
 {\sc F. Lara}, Generalized asymptotic functions in nonconvex multiobjective 
 optimization problems, {\it Optimization}, {\bf 66}, 1259--1272, (2017).

\bibitem{Lara-9}
 {\sc F. Lara}, On strongly quasiconvex functions: existence results and 
 proximal point algorithms, {\it J. Optim. Theory Appl.}, {\bf 192}, 891--911, (2022).

\bibitem{lara-lopez}
 {\sc F. Lara, R. L\'opez}, Formulas for asymptotic functions via conjugates, directional derivatives and subdifferentials, {\it J. Optim. Theory Appl.}, {\bf 173}, 793--811, (2017).


\bibitem{M1}
 {\sc O.L. Mangasarian}, Pseudo-convex functions, {\it J. SIAM Control, Ser. A}, {\bf 3},
 281--290, (1965).

\bibitem{Manga}
 {\sc O.L. Mangasarian}. ``Nonlinear Programming''. SIAM, Classics in A\-pplied
 Mathematics, Philadelphia, (1994). 

\bibitem{Penot} 
{\sc J. P. Penot}, What is quasiconvex analysis ?, {\it Optimization}, {\bf 47}, 35--110, (2000). 


\bibitem{rock} 
 \textsc{R.~T. Rockafellar}. ``Convex Analysis". Princeton University Press, Princeton, New Jersey, (1970).

\bibitem{Soltan}
 {\sc V. Soltan}, Asymptotic planes and closedness conditions for linear images and vector sums of sets, {\it J. of Convex Anal.}, {\bf 25}, 1183--1196, (2018). 

\end{thebibliography}
\end{document}